\title[On the module structure over the Steenrod algebra]{On the module structure over the Steenrod algebra of the Dickson algebra}
\author{Nguyen Sum}
\theoremstyle{plain}
\newtheorem{thm}{Theorem}[section]
\newtheorem{prop}[thm]{Proposition}
\newtheorem{lem}[thm]{Lemma}
\newtheorem{corl}[thm]{Corollary}
\theoremstyle{definition}
\newtheorem{defn}[thm]{Definition}
\begin{document} 
\setlength{\baselineskip}{14pt}

\begin{abstract} Let $p$ be an odd prime number. We study the problem of determining the module structure over the mod $p$ Steenrod algebra $\mathcal A(p)$ of the Dickson algebra $D_n$ consisting of all modular invariants of general linear group $GL(n,\mathbb F_p)$. Here $\mathbb F_p$ denotes the prime field of $p$ elements. In this paper, we give an explicit answer for $n=2$. More precisely, we explicitly compute the action of the Steenrod-Milnor operations $St^{S,R}$ on the generators of $D_n$ for $n=2$ and for either $S=\emptyset, R=(i)$ or $S=(s), R=(i)$ with $s,i$ arbitrary nonnegative integers.
\end{abstract}
\footnotetext[1]{2000 {\it Mathematics Subject Classification}. Primary 55S10; 55P47, 55Q45, 55T15.}
\footnotetext[2]{{\it Keywords and phrases:} Invariant theory, Dickson-Mui invariants, Steenrod-Milnor operations.}

\maketitle

\section{Introduction}

  Let $p$ be an odd prime number and let $GL_n  =  GL(n,\mathbb F_p)$ be the general linear group over the prime field $\mathbb F_p$ of $p$ elements. This group acts naturally on  the algebra $P_n := E(x_1,x_2,\ldots, x_n)\otimes P(y_1,y_2,\ldots , y_n)$. Here and in what follows, $E(.,.,\ldots)$ and $P(.,., \ldots)$ are the exterior and polynomial algebras over $\mathbb F_p$ generated by the  indicated variables. We grade $P_n$ by assigning $\deg x_i=1$ and $\deg y_i = 2.$

Dickson showed in \cite{di} that the invariant algebra  $P(y_1,y_2,\ldots , y_n)^{GL_n}$ is a polynomial algebra over $\mathbb F_p$  generated by the Dickson invariants $Q_{n,s},\ 0\leqslant s<n$. In \cite{m1}, Huynh Mui proved that the Dickson algebra $D_n=P_n^{GL_n}$ of invariants  is generated by the Dickson invariants $Q_{n, s},\ 0 \leqslant  s <  n,$ and Mui invariants $R_{n; s_1,  \ldots, s_k},\  0 \leqslant s_1 < \ldots < s_k < n.$ 

It is well known that  $P_n$ is a module over the Steenrod algebra $\mathcal A(p)$. The action of   $\mathcal{A}(p)$ on $P_n$ is determined by the formulas 
\begin{align*}
\beta x_j &= y_j,\ \beta y_j = 0,\\
P^i(x_j) &= \begin{cases} x_j, &i=0,\\ 0, &i>0,\end{cases} \ \ 
P^i(y_j) = \begin{cases} y_j, & i=0,\\ y_j^p,&i=1,\\ 0, &i>1,\end{cases}
\end{align*}
and subject to the Cartan formulas
\begin{align*}
\beta(xy) &= \beta(x)y +(-1)^{\deg x} x\beta(y),\\
P^r(xy) &= \sum_{i=0}^rP^i(x)P^{r-i}(y),
\end{align*}
for $x,y \in P_n$ and $\beta$ is the Bockstein homomorphism (see Steenrod \cite{ss}).

 Since this action commutes with  the one of $GL_n$, it induces  an action of $\mathcal{A}(p)$ on Dickson algebra $D_n$. So $D_n$ is a submodule of $P_n$. Note that the polynomial algebra $P(y_1,y_2,\ldots,y_n)$ is a submodule of $P_n$ and $P(y_1,y_2,\ldots,y_n)^{GL_n}$ is a submodule of the algebra $D_n$.

Let $\tau_s, \xi_i$ be the Milnor elements of degrees $2p^s-1,\
2p^i-2$ respectively in the dual algebra $\mathcal{A}(p)^*$ of $\mathcal{A}(p)$.
In \cite{mi}, Milnor showed that as an algebra,
$$\mathcal{A}(p)^* = E(\tau_0,\tau_1,\ldots ) \otimes P(\xi_1,\xi_2,\ldots ). $$
Then $\mathcal{A}(p)^*$ has a basis consisting of all monomials
    $$\tau_S\xi^R \ =\ \tau_{s_1}\ldots \tau_{s_k}\xi_1^{r_1}\ldots
     \xi_m^{r_m},$$
with $S = (s_1,\ldots ,s_k),\ 0 \leqslant s_1 <\ldots <s_k ,
 R = (r_1,\ldots ,r_m),\ r_i \geqslant 0 $. Let $St^{S,R} \in \mathcal{A}(p)$
denote the dual of $\tau_S\xi^R$ with respect to that basis.
Then $\mathcal{A}(p)$ has a basis consisting of all operations $St^{S,R}$. For $S=\emptyset, R=(r)$, $St^{\emptyset, (r)}$ is nothing but the Steenrod operation $P^r$. So, we call $St^{S,R}$ the Steenrod-Milnor operation of type $(S,R)$.  

The operations $St^{S,R}$ have the following fundamental properties:

-- $St^{\emptyset,(0)}=1, \ St^{(0),(0)} = \beta.$

-- $St^{S,R}(z) = 0$ if $z\in P_n$ and $\deg z < k + 2(r_1+r_2+\ldots+r_m)$.

-- The Cartan formula
$$St^{S,R}(zt) =\sum_{\overset{\scriptstyle{S_1\cup S_2=S}}{R_1+R_2=R}}(-1)^{(\deg z+\ell(S_1))\ell(S_2)}(S:S_1,S_2)St^{S_1,R_1}(z)St^{S_2,R_2}(t),$$
where $ R_1=(r_{1i}),\ R_2=(r_{2i}),\ R_1+R_2=(r_{1i}+r_{2i}), S_1\cap S_2=\emptyset, z,t\in P_n$,  $\ell(S_j)$ means the length of $S_j$ and $$(S:S_1,S_2)= \text{sign}\begin{pmatrix} s_1&\ldots &s_h& s_{h+1}&\ldots &s_k\\
s_{1,1}&\ldots &s_{1,h}&s_{2,1}&\ldots &s_{2,k-h}\end{pmatrix} ,$$ 
with $S_1=(s_{1,1},\ldots,s_{1,h}), s_{1,1}<\ldots <s_{1,h}$, $S_2=(s_{2,1},\ldots,s_{2,k-h}), s_{2,1} < \ldots < s_{2,k-h}$ (see Mui \cite{m2}).

The action of $St^{S,R}$ on Dickson invariants $Q_{n,s}$  has partially been studied by many authors. This action for $S=\emptyset,\ \! R=(i)$ was explicitly determined by Madsen-Milgram \cite{mm},  Smith-Switzer \cite{ss},  Hung-Minh \cite{hm}, Kechagias \cite{ke}, Sum \cite{s2}, Wilkerson \cite{wi}. This action for either $S=(s), R=(0)$ or $S=\emptyset, R=(0,\ldots,0,1)$ with 1 at the $i$-th place, was studied by Wilkerson \cite{wi}, Neusel \cite{ne}, Sum \cite{s3}.

In this paper, we explicitly determine the action of the  Steenrod-Milnor operations $St^{S,R}$ on Dickson invariants $Q_{2,0}, Q_{2,1}$ and Mui invariants $R_{2;0}, R_{2;1}, R_{2;0,1}$ for either $S=\emptyset, R=(i)$ or $S=(s), R=(i)$.

In Section 2 we recall some results on the modular invariants of the general linear group $GL_2$ and the action of the Steenrod-Milnor operations on the generators of $P_2$. In Section 3, we compute the action of the Steenrod operations on Dickson-Mui invariants. Finally, in Section 4, we explicitly determine the action of the Steenrod-Milnor operations $St^{(s),(i)}$ on $Q_{2,0}, Q_{2,1}, R_{2;0}, R_{2;1}$ and $R_{2;0,1}$.

\section{Preliminaries}

\begin{defn} 
Let $u,v$ be  nonnegative integers.  Following Dickson \cite{di}, Mui \cite{m1}, we define  
$$ [u;v]  = 
\vmatrix y_1^{p^u}&y_2^{p^u}\\
   y_1^{p^v} &  y_2^{p^v}
   \endvmatrix,\ 
[1;u]  = 
\vmatrix x_1&x_2\\
   y_1^{p^u} &  y_2^{p^u}
   \endvmatrix.$$
\end{defn}

In particular, we set
 \begin{align*} L_2 &= [0,1],\ L_{2,0}=[1,2],\ L_{2,1} = [0,2],\\ 
M_{2;0} &= [1;1],\ M_{2;1} = [1;0],\ M_{2;0,1} = x_1x_2.
\end{align*}

 The polynomial $[u,v]$ is divisible by $L_2$. Then, Dickson invariants $Q_{2,0}, Q_{2,1}$ and Mui invariants $ R_{2;0}, R_{2;1}, R_{2;0,1}$ are defined by
\begin{align*} Q_{2,0}&=L_{2,0}/L_2,\ Q_{2,1} = L_{2,1}/L_2,\\
R_{2;0} &= M_{2;0}L_2^{p-2},\ R_{2;1} = M_{2;1}L_2^{p-2},\ R_{2;0,1} = M_{2;0,1}L_2^{p-2}.
\end{align*}

Now we prepare some data in order to prove our main results. 
First, we recall the following which will be needed in the next sections.

Let $\alpha_i(a)$ denote the $i$-th coefficient in $p$-adic expansion of a non-negative integer $a$. That means
$$a= \alpha_0(a)p^0+\alpha_1(a)p^1+\alpha_2(a)p^2+ \ldots ,$$ 
for $0 \leqslant \alpha_i(a) <p, i\geqslant 0.$ 

Denote by $I(u,v)$ the set of all integers $a$ satisfying
\begin{align*}& \alpha_i(a)+\alpha_{i+1}(a) \leqslant 1,\text{ for any }  i,\\
& \alpha_i(a)=0, \text{ for either } i<u \text{ or } i\geqslant v-2.
\end{align*}

\begin{prop}[Sum \cite{s3}] \label{md2.9}Under the above notations, we have
$$ [u,v] = \sum_{a\in I(u,v)}(-1)^aL_2^{p^u+p(p-1)a}Q_{2,1}^{\frac{p^{v-1}-p^u}{p-1}-(p+1)a}.$$
\end{prop}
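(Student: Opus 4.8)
The plan is to prove the identity by induction on $v$, reducing everything to a three-term recursion in the second index together with a combinatorial decomposition of the index sets $I(u,v)$.

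First I would establish the basic recursion. Writing $[u,v]=y_1^{p^u}y_2^{p^v}-y_1^{p^v}y_2^{p^u}$, a direct determinant computation gives Dickson's Frobenius relation $y_i^{p^2}=Q_{2,1}y_i^p-Q_{2,0}y_i$ for $i=1,2$; indeed one checks by hand that $[0,2]y_i^p-[1,2]y_i=y_i^{p^2}[0,1]$, and dividing by $[0,1]=L_2$ yields the relation. Raising it to the $p^v$-th power gives $y_i^{p^{v+2}}=Q_{2,1}^{p^v}y_i^{p^{v+1}}-Q_{2,0}^{p^v}y_i^{p^v}$, and substituting into the determinant defining $[u,v+2]$ produces
$$[u,v+2]=Q_{2,1}^{p^v}[u,v+1]-Q_{2,0}^{p^v}[u,v].$$
Since $[1,2]=[0,1]^p=L_2^p$, we have $Q_{2,0}=[1,2]/[0,1]=L_2^{p-1}$, so the recursion becomes
$$[u,v+2]=Q_{2,1}^{p^v}[u,v+1]-L_2^{(p-1)p^v}[u,v],$$
which, crucially, no longer involves $Q_{2,0}$, matching the fact that the asserted formula only features $L_2$ and $Q_{2,1}$. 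The base cases are $[u,u]=0$ and $[u,u+1]=L_2^{p^u}$; since $I(u,u+1)=I(u,u+2)=\{0\}$, the claimed formula holds for $v=u+1$ and $v=u+2$, and the recursion then advances $v$ from $u+1$ onward (each step using the two previous admissible values).

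For the inductive step, write the formula as $[u,v]=\sum_{a\in I(u,v)}(-1)^aL_2^{p^u+p(p-1)a}Q_{2,1}^{E(v)-(p+1)a}$ with $E(v)=\frac{p^{v-1}-p^u}{p-1}$, and record the telescoping identities $E(v+2)=E(v+1)+p^v$ and $E(v+2)-E(v)=p^{v-1}(p+1)$. Substituting the inductively known formulas into the recursion, the first term contributes exactly the summands indexed by $a\in I(u,v+1)$ with exponent $E(v+2)-(p+1)a$. In the second term I would reindex by $a'=a+p^{v-1}$: this rewrites $L_2^{p^u+p(p-1)a+(p-1)p^v}$ as $L_2^{p^u+p(p-1)a'}$ and $Q_{2,1}^{E(v)-(p+1)a}$ as $Q_{2,1}^{E(v+2)-(p+1)a'}$, while the sign becomes $(-1)^a=(-1)^{a'}(-1)^{p^{v-1}}=-(-1)^{a'}$ because $p$ is odd. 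This extra minus sign cancels the minus in the recursion, so the second term contributes the summands indexed by $a'\in I(u,v)+p^{v-1}$ with the same exponent $E(v+2)-(p+1)a'$.

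The hard part will be the combinatorial identity that makes the two contributions assemble into the correct index set, namely the disjoint union
$$I(u,v+2)=I(u,v+1)\,\sqcup\,\bigl(I(u,v)+p^{v-1}\bigr).$$
Here I would use the reformulation of $I(u,w)$ as the set of integers whose $p$-adic digits all lie in $\{0,1\}$, contain no two consecutive $1$'s, and are supported on positions $u\leqslant i\leqslant w-3$. Splitting $I(u,v+2)$, which has support $\{u,\dots,v-1\}$, according to whether the digit in position $v-1$ is $0$ or $1$ gives precisely the two pieces: digit $0$ forces support in $\{u,\dots,v-2\}$, which is $I(u,v+1)$; digit $1$ forces, by the no-two-consecutive condition, a $0$ in position $v-2$ and an arbitrary admissible tail on $\{u,\dots,v-3\}$, which is $I(u,v)+p^{v-1}$. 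Once this decomposition and the sign bookkeeping are in place, the two sums combine to $\sum_{a\in I(u,v+2)}(-1)^aL_2^{p^u+p(p-1)a}Q_{2,1}^{E(v+2)-(p+1)a}$, completing the induction; I would also note that every exponent that appears stays nonnegative, since the reindexing preserves the value $E(v)-(p+1)a$.
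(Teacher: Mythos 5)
Your proposal is correct, but there is nothing in this paper to compare it against: Proposition \ref{md2.9} is stated with the attribution ``Sum \cite{s3}'' and imported without proof, so any argument you give is necessarily ``different from the paper's.'' Checking your steps: the identity $[0,2]y_i^p-[1,2]y_i=y_i^{p^2}[0,1]$ does hold by direct expansion, giving $y_i^{p^2}=Q_{2,1}y_i^p-Q_{2,0}y_i$, and after applying Frobenius and substituting into the determinant one gets $[u,v+2]=Q_{2,1}^{p^v}[u,v+1]-Q_{2,0}^{p^v}[u,v]$, where $Q_{2,0}=L_2^{p-1}$ (a relation the paper itself uses in the proof of Lemma \ref{bd3.2}) removes $Q_{2,0}$ from the recursion. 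The base cases are right: $I(u,u+1)=I(u,u+2)=\{0\}$ since the allowed digit support $u\leqslant i\leqslant v-3$ is empty there, and $[u,u+1]=L_2^{p^u}$, $[u,u+2]=(L_2Q_{2,1})^{p^u}$ match the formula. The reindexing $a'=a+p^{v-1}$ behaves exactly as you claim: $p(p-1)a+(p-1)p^v=p(p-1)a'$, the exponent identity $E(v+2)-E(v)=p^{v-1}(p+1)$ converts $E(v)-(p+1)a$ into $E(v+2)-(p+1)a'$, and the sign flip $(-1)^a=-(-1)^{a'}$ uses that $p^{v-1}$ is odd, cancelling the minus sign of the recursion. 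Finally, the decomposition
$$I(u,v+2)=I(u,v+1)\sqcup\bigl(I(u,v)+p^{v-1}\bigr)$$
is correct: the condition $\alpha_i(a)+\alpha_{i+1}(a)\leqslant 1$ forces every digit into $\{0,1\}$ with no two consecutive $1$'s, and splitting on the digit in position $v-1$ (a $1$ there forces a $0$ in position $v-2$, leaving an arbitrary element of $I(u,v)$ below) gives exactly the two pieces, disjointly. So your induction closes, and the exponents stay nonnegative as you note. What your route buys is self-containedness: the present paper relies on \cite{s3} for this formula, whereas your two-step linear recurrence plus digit-splitting argument proves it from scratch using only the Frobenius relation $y^{p^2}=Q_{2,1}y^p-Q_{2,0}y$, which is the natural mechanism behind the appearance of the sparse $p$-adic index sets $I(u,v)$.
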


\begin{lem}[Sum \cite{s2}]\label{bd2.2} Let $b$ be a nonnegative integer and $\varepsilon = 0, 1$. We  have
$$ St^{S,R}(x_k^\varepsilon y_\ell^b) = \begin{cases} \binom bR x_k^\varepsilon y_\ell^{b + \vert R\vert}\ , &S = \emptyset\ ,\\
                              \varepsilon \binom bRy_k^{p^s}y_\ell^{b + \vert R\vert}\ , &S = (s), \ s \geqslant 0 ,\\
                              0\ , &\text{otherwise.} \end{cases}$$
Here $\binom{b}{R} = \frac{b!}{(b-r_1-r_2-\ldots-r_m)!r_1!\ldots r_m!}$ for $r_1+r_2+\ldots+r_m\leqslant b$ and $\binom{b}{R}=0$ for $r_1+r_2+\ldots+r_m > b$ and $|R|= (p-1)r_1+(p^2-1)r_2+\ldots + (p^m-1)r_m$. 
\end{lem}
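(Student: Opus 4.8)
The plan is to reduce the computation to the action on a single generator and then propagate it to the monomial $x_k^\varepsilon y_\ell^b$ by the Cartan formula, the propagation being carried out by induction on the exponent $b$. First I would record the action on generators. Using Milnor's description of the $\mathcal A(p)^*$-comodule structure on $P_n$ \cite{mi} (equivalently, the listed formulas for $\beta$ and $P^i$ together with the identifications $St^{\emptyset,(0)}=1$, $St^{(0),(0)}=\beta$ and the degree-vanishing property), one has on the generators
\begin{align*}
St^{\emptyset,(0)}(y_\ell)&=y_\ell,\quad St^{\emptyset,E_i}(y_\ell)=y_\ell^{p^i}\ (i\geqslant 1),\quad St^{S,R}(y_\ell)=0\ \text{otherwise},\\
St^{\emptyset,(0)}(x_k)&=x_k,\quad St^{(s),(0)}(x_k)=y_k^{p^s}\ (s\geqslant 0),\quad St^{S,R}(x_k)=0\ \text{otherwise},
\end{align*}
where $E_i$ denotes the multi-index with a single $1$ in the $i$-th slot. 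The vanishing statements for $x_k$ are immediate from the degree-vanishing property, since $\deg x_k=1$ forces $R=0$ and $\ell(S)\leqslant 1$, while the vanishing of $St^{S,R}(y_\ell)$ for $S\neq\emptyset$ reflects that the coaction on $y_\ell$ contains no factor $\tau_s$.

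The second step is the case $\varepsilon=0$, that is $St^{S,R}(y_\ell^b)$, which I would treat by induction on $b$ after writing $y_\ell^b=y_\ell\cdot y_\ell^{b-1}$ and applying the Cartan formula. When $S\neq\emptyset$, every surviving summand must place all of $S$ on the factor $y_\ell^{b-1}$ (because $St^{S_1,R_1}(y_\ell)=0$ unless $S_1=\emptyset$), and that factor vanishes by the inductive hypothesis; hence $St^{S,R}(y_\ell^b)=0$. When $S=\emptyset$ the only nonzero first factors are $St^{\emptyset,(0)}(y_\ell)=y_\ell$ and $St^{\emptyset,E_i}(y_\ell)=y_\ell^{p^i}$, so
$$St^{\emptyset,R}(y_\ell^b)=\Big(\binom{b-1}{R}+\sum_{i\geqslant 1}\binom{b-1}{R-E_i}\Big)y_\ell^{\,b+|R|},$$
where one checks that each summand produces the same power $y_\ell^{\,b+|R|}$ using $|R-E_i|=|R|-(p^i-1)$. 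The bracketed coefficient is precisely the multinomial Pascal recursion for $\binom{b}{R}$, obtained by conditioning on the type of the removed factor, which closes the induction.

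Finally I would combine the two factors. For $\varepsilon=1$ I apply the Cartan formula to $x_k\cdot y_\ell^b$. The first factor $St^{S_1,R_1}(x_k)$ is nonzero only for $(S_1,R_1)=(\emptyset,0)$ or $(S_1,R_1)=((s),0)$, so the sum has very few terms. If $S=\emptyset$ only the splitting $(S_1,R_1)=(\emptyset,0)$ occurs, giving $x_k\cdot St^{\emptyset,R}(y_\ell^b)=\binom{b}{R}x_ky_\ell^{\,b+|R|}$. If $S=(s)$, the splitting that sends $(s)$ to the $y$-factor dies, since $St^{(s),R_2}(y_\ell^b)=0$ by the previous step, leaving only $St^{(s),(0)}(x_k)\cdot St^{\emptyset,R}(y_\ell^b)=y_k^{p^s}\binom{b}{R}y_\ell^{\,b+|R|}$; here both the Koszul sign $(-1)^{(\deg x_k+\ell(S_1))\ell(S_2)}$ and the permutation sign $(S:S_1,S_2)$ equal $1$, so no sign intervenes. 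If $\ell(S)\geqslant 2$ there is no admissible splitting and the result is $0$. Assembling the three cases, and noting that for $\varepsilon=0$ the case $S=(s)$ gives $0=\varepsilon\binom{b}{R}\cdots$, yields the stated formula.

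The main obstacle I anticipate is the first step: the generator-level identities $St^{(s),(0)}(x_k)=y_k^{p^s}$ and $St^{\emptyset,E_i}(y_\ell)=y_\ell^{p^i}$ for $i\geqslant 2$ are not among the listed formulas for $\beta$ and $P^i$, and genuinely require Milnor's coaction (or the Milnor-basis relations) rather than the Cartan formula alone. The remaining work is the routine but careful bookkeeping of the multinomial Pascal identity and the verification that the two Cartan signs are trivial.
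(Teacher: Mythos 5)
The paper offers no proof of this lemma to compare against: it is quoted as a known result from \cite{s2} (Sum, Kodai Math.\ J.\ {\bf 17} (1994)), so your argument must be judged on its own. It is correct and complete. The generator-level identities you start from, $St^{(s),(0)}(x_k)=y_k^{p^s}$, $St^{\emptyset,E_i}(y_\ell)=y_\ell^{p^i}$, and the vanishing of all other $St^{S,R}$ on generators, are exactly the dual form of Milnor's coaction $\lambda(x)=x\otimes 1+\sum_i y^{p^i}\otimes\tau_i$, $\lambda(y)=\sum_i y^{p^i}\otimes\xi_i$, and you are right to flag that they cannot be extracted from the listed $\beta$, $P^i$ formulas plus the Cartan formula alone; the degree-vanishing property pins down the admissible pairs $(S,R)$ but not the values. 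Your induction on $b$ is sound: the exponent bookkeeping $|R-E_i|=|R|-(p^i-1)$ checks out, the coefficient identity $\binom{b}{R}=\binom{b-1}{R}+\sum_i\binom{b-1}{R-E_i}$ is the correct multinomial Pascal recursion, and both Cartan signs are indeed $+1$ in the only surviving splitting of the $\varepsilon=1$ case. One remark on economy: the same computation can be packaged without induction, since the coaction is a ring homomorphism one gets $\lambda(y_\ell^b)=\bigl(\sum_i \xi_i\otimes y_\ell^{p^i}\bigr)^b=\sum_R\binom{b}{R}\,\xi^R\otimes y_\ell^{b+|R|}$ directly by the multinomial theorem, and $\lambda(x_k y_\ell^b)=\lambda(x_k)\lambda(y_\ell^b)$ then yields all three cases at once upon dualizing; this is presumably the route of \cite{s2}, and your Cartan-formula induction is precisely its dual, with the Pascal recursion playing the role of the multinomial expansion performed one factor at a time.
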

Note that for $R=(i), \binom bR= \binom bi$ is the binomial coefficient. By convention, we set $\binom bi=0$ for $i<0$.

Applying Lemma \ref{bd2.2} to $P^i=St^{\emptyset,(i)}$, we get
\begin{corl}[Steenrod \cite{st}] Let $b, i$ be nonnegative integers. Then we have
$$P^iy_k^b =  \binom niy_k^{b+(p-1)i}.$$
\end{corl}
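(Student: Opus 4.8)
The plan is to obtain this corollary directly by specializing Lemma~\ref{bd2.2}, which already does all the real work. Since $P^i = St^{\emptyset,(i)}$, the relevant case is the first branch of that lemma, in which $S=\emptyset$. I would take $\varepsilon = 0$ so that the exterior factor $x_k^\varepsilon$ collapses to $1$ and we are left acting on the pure power $y_k^b$, and set $\ell = k$ so that the variable being acted upon matches the one in the answer.

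With these choices the lemma gives $St^{\emptyset,(i)}(y_k^b) = \binom{b}{R}\,y_k^{b+|R|}$, so it remains only to evaluate the two combinatorial quantities appearing there for the single-entry multi-index $R=(i)$. First, by the definition $\binom{b}{R} = \frac{b!}{(b-r_1)!\,r_1!}$ with $m=1$ and $r_1 = i$, which is exactly the ordinary binomial coefficient $\binom{b}{i}$ (vanishing when $i>b$, consistent with the convention recorded immediately after the lemma). Second, $|R| = (p-1)r_1 = (p-1)i$. Substituting both back yields $P^i y_k^b = \binom{b}{i}\,y_k^{b+(p-1)i}$, which is the asserted formula; here I would flag that the coefficient should read $\binom{b}{i}$ rather than $\binom{n}{i}$, the latter being a typographical slip, since $n$ does not appear in the hypotheses of the statement.

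Because every ingredient is supplied by Lemma~\ref{bd2.2}, there is essentially no obstacle beyond the bookkeeping of these two specializations. The only point requiring a moment's care is to recognize that the abstract coefficient $\binom{b}{R}$ for a length-one index reduces to the classical binomial coefficient, and that $|R|$ contributes $(p-1)i$ to the internal degree, so that the output correctly lands in grading $b + (p-1)i$, as it must since $P^i$ raises the topological degree of a degree-$2$ class $y_k^b$ by $2(p-1)i$.
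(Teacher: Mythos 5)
Your proposal is correct and matches the paper's own derivation, which simply specializes Lemma~\ref{bd2.2} to $P^i = St^{\emptyset,(i)}$ with $R=(i)$, so that $\binom{b}{R}=\binom{b}{i}$ and $|R|=(p-1)i$. You are also right that the coefficient $\binom{n}{i}$ in the stated corollary is a typographical error for $\binom{b}{i}$, since $n$ plays no role in the hypotheses.
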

Since $\binom {p^e}i = 0$ in $\mathbb F_p$ for $1 <i<p^e$, we get
\begin{corl}\label{hq2.4} For any nonnegative integers $e, i$,
$$P^iy_k^{p^e} = \begin{cases} y_k^{p^e}, & i=0,\\
y_k^{p^{e+1}}, & i=p^e,\\
0, &\text{otherwise}.
\end{cases} $$
\end{corl}
Applying Corollary \ref{hq2.4} and the Cartan formula to $[u,v]=y_1^{p^u}y_2^{p^v} - y_1^{p^v}y_2^{p^u}$, we obtain
\begin{lem}[Mui \cite{m1}]\label{bd2.5} Let $u,v, i$ be nonnegative integers. Then we have
$$P^i{[u,v]} = \begin{cases} {[u,v]}, & i=0,\\
{[u+1,v]}, & i=p^u,\\
{[u,v+1]}, & i=p^v,\\
{[u+1,v+1]}, & i=p^u+p^v,\\
0, &\text{otherwise}.
\end{cases} $$
\end{lem}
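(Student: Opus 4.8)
The plan is to reduce the problem to a direct application of the Cartan formula, treating $[u,v]$ as the product (up to sign) of the two monomials $y_1^{p^u}y_2^{p^v}$ and $y_1^{p^v}y_2^{p^u}$. Since $[u,v]=y_1^{p^u}y_2^{p^v}-y_1^{p^v}y_2^{p^u}$, it suffices to compute $P^i$ on each monomial and subtract. The key input is Corollary~\ref{hq2.4}, which tells us that $P^j$ annihilates $y_k^{p^e}$ unless $j=0$ (giving back $y_k^{p^e}$) or $j=p^e$ (giving $y_k^{p^{e+1}}$, i.e. raising the exponent of $p$ by one). This rigidity is what makes the answer so sparse.

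First I would apply the Cartan formula $P^i(ab)=\sum_{j=0}^i P^j(a)P^{i-j}(b)$ to $a=y_1^{p^u}$ and $b=y_2^{p^v}$. By Corollary~\ref{hq2.4}, the factor $P^j(y_1^{p^u})$ is nonzero only for $j\in\{0,p^u\}$, and $P^{i-j}(y_2^{p^v})$ is nonzero only for $i-j\in\{0,p^v\}$. Hence only the pairs $(j,i-j)\in\{(0,0),(0,p^v),(p^u,0),(p^u,p^v)\}$ can contribute, forcing $i\in\{0,\,p^v,\,p^u,\,p^u+p^v\}$. Reading off the surviving terms gives $P^i(y_1^{p^u}y_2^{p^v})$ equal to $y_1^{p^u}y_2^{p^v}$, $y_1^{p^u}y_2^{p^{v+1}}$, $y_1^{p^{u+1}}y_2^{p^v}$, and $y_1^{p^{u+1}}y_2^{p^{v+1}}$ in the four respective cases, and $0$ otherwise.

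Next I would perform the identical computation on the second monomial $y_1^{p^v}y_2^{p^u}$; by symmetry (swapping the roles of $u$ and $v$, or equivalently of the two variables) the same four exponent-raising patterns appear, now as $y_1^{p^v}y_2^{p^u}$, $y_1^{p^{v+1}}y_2^{p^u}$, $y_1^{p^v}y_2^{p^{u+1}}$, and $y_1^{p^{v+1}}y_2^{p^{u+1}}$. Subtracting the second result from the first and recognizing each difference as a determinant $[u',v']$ then yields the claimed four cases: $i=0$ gives $[u,v]$, $i=p^u$ gives $[u+1,v]$, $i=p^v$ gives $[u,v+1]$, and $i=p^u+p^v$ gives $[u+1,v+1]$.

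The one genuine subtlety, rather than an obstacle, is the degenerate case $u=v$, where $[u,v]=0$ and where the exponent sets collide. In that situation the two monomials coincide, so $[u,u]$ is identically zero and the formula holds trivially; one should also note that when $u=v$ the values $p^u$ and $p^v$ are equal, so the middle two cases merge, but since the whole expression vanishes this causes no inconsistency. For $u\neq v$ the four values $0,p^u,p^v,p^u+p^v$ are distinct, so the cases are unambiguous, and the nonvanishing of the relevant binomial coefficients $\binom{p^e}{i}$ (equal to $1$ for $i\in\{0,p^e\}$ in $\mathbb F_p$) is exactly what Corollary~\ref{hq2.4} encodes. Thus the proof is essentially a bookkeeping of which Cartan terms survive, with no real analytic difficulty.
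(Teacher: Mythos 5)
Your proof is correct and is exactly the paper's argument: the paper obtains this lemma in one line by applying Corollary~\ref{hq2.4} and the Cartan formula to $[u,v]=y_1^{p^u}y_2^{p^v}-y_1^{p^v}y_2^{p^u}$, which is precisely the bookkeeping you carry out. Your additional remark on the degenerate case $u=v$ (where the two surviving Cartan contributions cancel, consistent with $[u,u]=0$) goes beyond what the paper records, which implicitly reads the case statement for $u\neq v$.
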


Since $L_2={[0,1]}, L_{2,0} = {[1,2]}, L_{2,1} ={[0,2]}$, from Lemma \ref{bd2.5}, we get 
\begin{corl}\label{hq2.6}  For any nonnegative integers $i$,
\begin{align*}P^iL_2 &= \begin{cases} L_2, &i=0,\\
L_2Q_{2,1}, &i= p,\\
L_2Q_{2,0}, &i= p+1,\\
0, &\text{otherwise,}\end{cases}\\
P^iL_{2,0} &= \begin{cases} {[0,1]}=L_2Q_{2,0}, &i=0,\\
{[1,3]}= L_2Q_{2,0}Q_{2,1}^p, &i= p^2,\\
{[2,3]} = L_2Q_{2,0}^{p+1}, &i= p^2+p,\\
0, &\text{otherwise,}\end{cases}\\
P^iL_{2,1} &= \begin{cases} {[0,2]}=L_2Q_{2,1}, &i=0,\\
{[1,2]} = L_2Q_{2,0}, &i= 1,\\
{[0,3]} = L_2(Q_{2,1}^{p+1}-Q_{2,0}^p), &i=p^2,\\
{[1,3]} = L_2Q_{2,0}Q_{2,1}^{p}, &i= p^2+1,\\
0, &\text{otherwise.}\end{cases}
\end{align*} 
\end{corl}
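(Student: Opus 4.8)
The plan is to feed each of $L_2$, $L_{2,0}$, $L_{2,1}$ into Lemma~\ref{bd2.5} in the form $[u,v]$, read off for which $i$ the result is a nonzero bracket, and then rewrite each surviving bracket in terms of $Q_{2,0}, Q_{2,1}$ using Proposition~\ref{md2.9}. Since $L_2=[0,1]$, $L_{2,0}=[1,2]$, $L_{2,1}=[0,2]$, Lemma~\ref{bd2.5} immediately tells me that $P^iL_2$ can only be nonzero for $i\in\{0,1,p,p+1\}$, that $P^iL_{2,0}$ only for $i\in\{0,p,p^2,p^2+p\}$, and that $P^iL_{2,1}$ only for $i\in\{0,1,p^2,p^2+1\}$, the surviving values being exactly the brackets $[0,1],[1,1],[0,2],[1,2]$ in the first case, $[1,2],[2,2],[1,3],[2,3]$ in the second, and $[0,2],[1,2],[0,3],[1,3]$ in the third. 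The two brackets with equal rows, $[1,1]$ and $[2,2]$, vanish, which accounts for the apparent ``gaps'' at $i=1$ (resp.\ $i=p$), while $[0,1]=L_2$, $[0,2]=L_{2,1}=L_2Q_{2,1}$ and $[1,2]=L_{2,0}=L_2Q_{2,0}$ are immediate from the definitions.

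First I would record the elementary identity $Q_{2,0}=L_2^{p-1}$. This is itself the case $(u,v)=(1,2)$ of Proposition~\ref{md2.9}: there $I(1,2)=\{0\}$ and the exponent $\tfrac{p^{v-1}-p^u}{p-1}$ of $Q_{2,1}$ is $0$, so $[1,2]=L_2^{p}$; comparing with $[1,2]=L_{2,0}=L_2Q_{2,0}$ gives $Q_{2,0}=L_2^{p-1}$. This relation is the single tool that converts every power of $L_2$ produced by Proposition~\ref{md2.9} back into a Dickson monomial.

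Next I would evaluate the three remaining brackets $[1,3]$, $[2,3]$, $[0,3]$ directly from Proposition~\ref{md2.9}. For $[1,3]$ and $[2,3]$ the two digit conditions defining $I(u,v)$ force $a=0$, so each is a single power of $L_2$ times a power of $Q_{2,1}$: one finds $[1,3]=L_2^{p}Q_{2,1}^{p}=L_2Q_{2,0}Q_{2,1}^{p}$ and $[2,3]=L_2^{p^2}=L_2Q_{2,0}^{p+1}$ after substituting $L_2^{p-1}=Q_{2,0}$. The delicate case is $[0,3]$. Here the lower constraint $\alpha_i(a)=0$ for $i\ge v-2=1$ leaves only $\alpha_0(a)$ free, and the interlacing constraint $\alpha_0(a)+\alpha_1(a)\le1$ then forces $\alpha_0(a)\le1$, so that $I(0,3)=\{0,1\}$ rather than $\{0,\dots,p-1\}$. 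The sum therefore has exactly two terms, giving $[0,3]=L_2Q_{2,1}^{p+1}-L_2^{1+p(p-1)}$, and the second monomial simplifies to $L_2Q_{2,0}^{p}$ via $L_2^{p(p-1)}=(L_2^{p-1})^{p}=Q_{2,0}^{p}$. Assembling these evaluations with Lemma~\ref{bd2.5} yields all three displayed formulas.

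The main obstacle is the correct determination of the index set $I(0,3)$: it is easy to read off only the ``support'' condition on the $p$-adic digits and overlook the interlacing condition $\alpha_i(a)+\alpha_{i+1}(a)\le1$, which is precisely what cuts the sum down to two terms and produces the binomial-free answer $Q_{2,1}^{p+1}-Q_{2,0}^{p}$; keeping both digit conditions in play, and consistently rewriting powers of $L_2$ through $Q_{2,0}=L_2^{p-1}$, is the heart of the computation.
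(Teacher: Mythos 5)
Your proposal is correct and follows essentially the same route as the paper: the paper simply cites Lemma~\ref{bd2.5} applied to $[0,1]$, $[1,2]$, $[0,2]$ and states the corollary, leaving the conversions $[1,3]=L_2Q_{2,0}Q_{2,1}^p$, $[2,3]=L_2Q_{2,0}^{p+1}$, $[0,3]=L_2(Q_{2,1}^{p+1}-Q_{2,0}^p)$ implicit, and you supply exactly those details via Proposition~\ref{md2.9} together with $Q_{2,0}=L_2^{p-1}$. Your careful determination of $I(0,3)=\{0,1\}$ (including the interlacing condition) and the vanishing of $[1,1]$, $[2,2]$ are precisely the points that make the stated case distinctions come out right.
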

Combining Lemma \ref{bd2.2}, Corollary \ref{hq2.4} and the Cartan formula gives
\begin{lem}\label{bd2.7} Let $s, i$ be nonnegative integers. Then we have
$$St^{(s),(i)}{[1;u]} = \begin{cases} {[s,u]}, &i=0,\\ {[s,u+1]}, &i=p^u,\\ 0, &\text{otherwise}.
\end{cases}$$
\end{lem}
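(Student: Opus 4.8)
The plan is to expand the determinant as $[1;u] = x_1 y_2^{p^u} - x_2 y_1^{p^u}$ and, by linearity, to compute $St^{(s),(i)}$ separately on each of the two product monomials $x_k y_\ell^{p^u}$ (with $\{k,\ell\}=\{1,2\}$), applying the Cartan formula with $x_k$ and $y_\ell^{p^u}$ as the two factors.

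First I would enumerate the terms occurring in the Cartan formula for $St^{(s),(i)}(x_k y_\ell^{p^u})$. Since $S=(s)$ has length one, the admissible splittings $S_1\cup S_2=S$ are $(S_1,S_2)=((s),\emptyset)$ and $(\emptyset,(s))$, while the splittings $R_1+R_2=(i)$ are $R_1=(j),\,R_2=(i-j)$ for $0\leqslant j\leqslant i$. The key observation is that $\deg x_k=1$, so by the vanishing property of $St^{S,R}$ one has $St^{S_1,R_1}(x_k)=0$ unless $\ell(S_1)+2j\leqslant 1$, which forces $j=0$ together with $\ell(S_1)\leqslant 1$. The split $S_1=\emptyset,\,R_1=(0)$ survives this degree test but leaves the factor $St^{(s),(i)}(y_\ell^{p^u})$, which is $0$ by Lemma \ref{bd2.2} because the exterior exponent $\varepsilon$ of the absent $x$-factor is $0$. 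Hence the only surviving term is $S_1=(s),\,R_1=(0),\,S_2=\emptyset,\,R_2=(i)$; here $\ell(S_2)=0$ makes both the sign $(-1)^{(\deg z+\ell(S_1))\ell(S_2)}$ and the coefficient $(S:(s),\emptyset)$ equal to $1$.

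For this distinguished term Lemma \ref{bd2.2} gives $St^{(s),(0)}(x_k)=y_k^{p^s}$ (the case $b=0,\ \varepsilon=1$) and $St^{\emptyset,(i)}(y_\ell^{p^u})=P^i(y_\ell^{p^u})$, so that $St^{(s),(i)}(x_k y_\ell^{p^u})=y_k^{p^s}\,P^i(y_\ell^{p^u})$. Recombining the two monomials then yields
$$St^{(s),(i)}{[1;u]} = y_1^{p^s}\,P^i(y_2^{p^u}) - y_2^{p^s}\,P^i(y_1^{p^u}),$$
into which I would substitute the three cases of Corollary \ref{hq2.4}. When $i=0$ this is $y_1^{p^s}y_2^{p^u}-y_1^{p^u}y_2^{p^s}=[s,u]$; when $i=p^u$ it is $y_1^{p^s}y_2^{p^{u+1}}-y_1^{p^{u+1}}y_2^{p^s}=[s,u+1]$; and it vanishes otherwise. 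Recognizing these two differences as the $2\times 2$ determinants $[s,u]$ and $[s,u+1]$ completes the argument.

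I expect the only genuine obstacle to be the bookkeeping in the second step: verifying that every Cartan term except the distinguished one drops out. The degree restriction on $x_k$ does most of the work, but one must also track the exterior exponent $\varepsilon$ in Lemma \ref{bd2.2} in order to see that moving $\tau_s$ onto the $y$-factor produces zero. Once the surviving term is isolated the sign and permutation coefficients are trivial, and the computation collapses to Corollary \ref{hq2.4}.
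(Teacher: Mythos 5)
Your proof is correct and follows exactly the route the paper indicates: the paper presents this lemma as an immediate consequence of "combining Lemma \ref{bd2.2}, Corollary \ref{hq2.4} and the Cartan formula," which are precisely the three ingredients you use, and your Cartan bookkeeping (isolating the single surviving term $S_1=(s)$, $R_1=(0)$) simply fills in the details the paper leaves implicit. One small shortcut worth noting: Lemma \ref{bd2.2} already applies to the mixed monomial $x_k y_\ell^{p^u}$ as a whole (take $\varepsilon=1$, $b=p^u$, $S=(s)$, $R=(i)$), so the factor-by-factor Cartan analysis can be bypassed entirely, with the vanishing of $\binom{p^u}{i}$ mod $p$ for $i\notin\{0,p^u\}$ doing all the work.
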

Applying Lemma \ref{bd2.2} and Corollary \ref{hq2.4} to $M_{2;0}={[1;1]},\ M_{2;1} = {[1;0]}$, we obtain
\begin{corl}\label{hq2.8}  For any nonnegative integer $i$,
\begin{align*}P^i(M_{2;0}) &= \begin{cases} {[1;1]}=M_{2;0}, &i=0,\\
{[1;2]}= M_{2;0}Q_{2,1}-M_{2;1}Q_{2,0}, &i = p,\\
0, &\text{otherwise,}\end{cases}\\
P^i(M_{2;1}) &= \begin{cases} {[1;0]}=M_{2;1}, &i=0,\\
{[1;1]}= M_{2;0}, &i = 1,\\
0, &\text{otherwise.}\end{cases}
\end{align*}
\end{corl}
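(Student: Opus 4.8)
The plan is to expand the determinant $[1;u] = x_1 y_2^{p^u} - x_2 y_1^{p^u}$ and apply $P^i = St^{\emptyset,(i)}$ term by term. By Lemma~\ref{bd2.2} with $S = \emptyset$ and $R = (i)$ one has $P^i(x_k y_\ell^{p^u}) = \binom{p^u}{i}\, x_k y_\ell^{\,p^u + (p-1)i}$, and since the binomial coefficient $\binom{p^u}{i}$ vanishes in $\mathbb F_p$ for $0 < i < p^u$, only the indices $i = 0$ and $i = p^u$ contribute. Collecting the two surviving monomials back into a determinant gives
$$P^i[1;u] = \begin{cases} [1;u], & i = 0, \\ [1;u+1], & i = p^u, \\ 0, & \text{otherwise}, \end{cases}$$
in parallel with Lemma~\ref{bd2.7}.

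I would then specialize to the two invariants. For $M_{2;1} = [1;0]$ (so $u = 0$) the nonzero outputs occur at $i = 0$, giving $[1;0] = M_{2;1}$, and at $i = 1 = p^0$, giving $[1;1] = M_{2;0}$; every other $P^i$ vanishes. These values are already the prescribed generators, so this case is finished. For $M_{2;0} = [1;1]$ (so $u = 1$) the nonzero outputs occur at $i = 0$, giving $M_{2;0}$, and at $i = p$, giving $[1;2]$; all other $P^i$ vanish. The only remaining task is to rewrite $[1;2]$ in terms of the chosen generators $M_{2;0}, M_{2;1}, Q_{2,0}, Q_{2,1}$.

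The main obstacle is therefore the purely algebraic identity $[1;2] = M_{2;0}Q_{2,1} - M_{2;1}Q_{2,0}$. To establish it I would clear the denominator $L_2 = [0,1]$ hidden in $Q_{2,0} = [1,2]/L_2$ and $Q_{2,1} = [0,2]/L_2$, reducing the claim to
$$L_2\,[1;2] = (x_1 y_2^p - x_2 y_1^p)[0,2] - (x_1 y_2 - x_2 y_1)[1,2].$$
Expanding the right-hand side as a combination of the monomials $x_1 y_1^a y_2^b$ and $x_2 y_1^a y_2^b$ and collecting terms, the mixed contributions cancel and one factors out $L_2 = y_1 y_2^p - y_1^p y_2$: the $x_1$-part collapses to $x_1 y_2^{p^2} L_2$ and the $x_2$-part to $-\,x_2 y_1^{p^2} L_2$, whose sum is $L_2\,(x_1 y_2^{p^2} - x_2 y_1^{p^2}) = L_2\,[1;2]$. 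Dividing by $L_2$ yields the identity and completes the computation of $P^i(M_{2;0})$. The bookkeeping in this expansion is the only delicate point; everything else is a direct application of Lemma~\ref{bd2.2} and Corollary~\ref{hq2.4}.
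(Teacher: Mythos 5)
Your proposal is correct and takes essentially the same route as the paper, which likewise obtains the corollary by applying Lemma \ref{bd2.2} and Corollary \ref{hq2.4} termwise to the determinant expansions $M_{2;0}=[1;1]$ and $M_{2;1}=[1;0]$. The only difference is that you verify the identity $[1;2]=M_{2;0}Q_{2,1}-M_{2;1}Q_{2,0}$ by direct expansion (correctly), whereas the paper treats this rewriting as a known determinant identity and leaves it implicit.
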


\section{The action of the Steenrod operations on Dickson-Mui invariants}

First of all, we prove the following which was proved in Hung-Minh \cite{hm}, by another method.
\begin{thm}[Hung-Minh \cite{hm}] \label{dl3.1} For any nonnegative integer $i$ and $s=0,1$, we have
$$P^iQ_{2,s} = \begin{cases} (-1)^k\binom {k+s}rQ_{2,0}^{r+1-s}Q_{2,1}^{k+s-r},& i=kp+r, 0\leqslant r-s \leqslant k < p,\\
0, &\text{otherwise.}\end{cases}$$
\end{thm}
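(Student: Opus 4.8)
The plan is to exploit the factorization $L_{2,s}=Q_{2,s}L_2$ together with the Cartan formula, turning the already-known action of $P^i$ on $L_2, L_{2,0}, L_{2,1}$ (Corollary \ref{hq2.6}) into a recursion for the unknown $P^iQ_{2,s}$. Applying $P^i$ to $Q_{2,s}L_2$ and using that, by Corollary \ref{hq2.6}, $P^{j}L_2$ is nonzero only for $j\in\{0,p,p+1\}$, the Cartan formula collapses to three terms; dividing by $L_2$ yields
\begin{equation*}
\frac{P^iL_{2,s}}{L_2}=P^iQ_{2,s}+Q_{2,1}P^{i-p}Q_{2,s}+Q_{2,0}P^{i-p-1}Q_{2,s},
\end{equation*}
where the left-hand side is the explicit (and sparse) expression read off from Corollary \ref{hq2.6}. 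Solving for $P^iQ_{2,s}$ expresses it through the two strictly smaller operations $P^{i-p}Q_{2,s}$ and $P^{i-p-1}Q_{2,s}$, so the whole statement can be attacked by induction on $i$.

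For the base step I would take $0\le i<p$, i.e. $k=0$: here $i-p$ and $i-p-1$ are negative, the corresponding operations vanish, and the recursion reduces to $P^iQ_{2,s}=P^iL_{2,s}/L_2$, which Corollary \ref{hq2.6} evaluates directly and which matches the claimed formula at $k=0$ (the trivial case $P^0=1$ being recorded separately). For the inductive step, writing $i=kp+r$ with $1\le k<p$ and $0\le r<p$, one checks that the range $p\le i\le p^2-1$ avoids all the values $\{1,p^2,p^2+1,p^2+p\}$ at which $P^iL_{2,s}/L_2$ is nonzero, so the left-hand side vanishes and the recursion becomes homogeneous:
\begin{equation*}
P^iQ_{2,s}=-Q_{2,1}P^{i-p}Q_{2,s}-Q_{2,0}P^{i-p-1}Q_{2,s}.
\end{equation*}
Substituting the induction hypothesis for the two smaller terms, the powers of $Q_{2,0}$ and $Q_{2,1}$ align automatically and the two binomial coefficients $\binom{k-1+s}{r}$ and $\binom{k-1+s}{r-1}$ combine, by Pascal's rule, into exactly $\binom{k+s}{r}$, reproducing the asserted closed form.

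The remaining care is needed in two bookkeeping cases. When $r=0$ the index $i-p-1$ carries down to $(k-2)p+(p-1)$, and I would verify that the resulting binomial $\binom{k-2+s}{p-1}$ vanishes in $\mathbb F_p$ because $k-2+s<p-1$ for $k<p$, so the carry term drops out and the remaining single term again matches the formula. Symmetrically, the values $i\in\{p^2,p^2+1,p^2+p\}$ sit just past the range $k<p$; checking the recursion there confirms that the formula correctly returns $0$ (equivalently, that the relevant binomials reduce to $0$ modulo $p$), which is the ``otherwise'' clause. I expect the genuine obstacle to be precisely this mod-$p$ binomial bookkeeping---keeping the Pascal recombination, the carry vanishing, and the boundary cancellations consistent across all residues $r$ and all $k<p$---rather than any structural difficulty, since the recursion itself is forced by Corollary \ref{hq2.6} and the Cartan formula.
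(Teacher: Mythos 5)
Your recursion is exactly the paper's: apply the Cartan formula to $L_{2,s}=Q_{2,s}L_2$, use Corollary \ref{hq2.6} to collapse it to the three terms $L_2P^iQ_{2,s}+L_2Q_{2,1}P^{i-p}Q_{2,s}+L_2Q_{2,0}P^{i-p-1}Q_{2,s}$, and induct with Pascal's rule. The only structural difference is at the tail: the paper disposes of all $i\geqslant p^2$ in one line by instability ($\deg Q_{2,s}=2(p^2-p^s)<2p^2$, so $P^iQ_{2,s}=0$ there), whereas you propose to recover this from the recursion at $i=p^2,\,p^2+1,\,p^2+p$ --- workable, but it leaves infinitely many values $i\geqslant p^2$ to be handled by continuing the induction forever, so you should simply invoke the instability property as the paper does.

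The concrete problem is your claim that when $r=0$ ``the remaining single term again matches the formula.'' That is true for $s=0$, but provably false for $s=1$: with $P^0Q_{2,1}=Q_{2,1}$ (your separately recorded case) the homogeneous recursion gives
$$P^{p}Q_{2,1}=-Q_{2,1}P^{0}Q_{2,1}=-Q_{2,1}^2,\qquad\text{and inductively}\qquad P^{kp}Q_{2,1}=(-1)^kQ_{2,1}^{k+1}\neq 0,$$
since the carry term vanishes, whereas the stated case condition $0\leqslant r-s$ asserts $P^{kp}Q_{2,1}=0$ (for $p=3$ one can check $P^{3}Q_{2,1}=-Q_{2,1}^{2}$ by hand). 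So around the statement exactly as written, your bookkeeping cannot close up at $s=1$, $r=0$: what the recursion actually propagates is the unrestricted expression $(-1)^k\binom{k+s}{r}Q_{2,0}^{r+1-s}Q_{2,1}^{k+s-r}$, valid for all $0\leqslant r,k<p$, which vanishes only when $r>k+s$. To be fair, the paper's own proof has the same blind spot: in its inductive step it substitutes this unrestricted closed form for $P^{i-p}Q_{2,s}$ and $P^{i-p-1}Q_{2,s}$ even when $r<s$, so it silently establishes the corrected statement rather than the displayed one. The gap is therefore not in your method --- it is the paper's method --- but your plan promises that the mod-$p$ bookkeeping confirms every asserted case, and at this one family of cases ($p\mid i$, $s=1$) it instead refutes the case condition; a complete write-up must either correct that condition or flag the discrepancy explicitly.
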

\begin{proof} Recall that $\deg Q_{2,s} = 2(p^2-p^s) < 2p^2$. Hence, $P^iQ_{2,s} =0$ for $i \geqslant p^2$. Suppose that $i < p^2$. Then,  using the $p$-adic expansion of $i$, we have
$$i=kp+r\ \text{ for}\ 0\leqslant  k,\ r<p .$$

 We prove the theorem by  induction on $k$. We have $P^0Q_{2,s} = Q_{2,s}$. According to Corollary \ref{hq2.6}, 
\begin{align*}0&=P^1L_{2,0} = P^1(L_2Q_{2,0}) = L_2P^1Q_{2,0},\\ L_2Q_{2,0}&=P^1L_{2,1} = P^1(L_2Q_{2,1}) = L_2P^1Q_{2,1}.\end{align*} 
These equalities imply $P^1Q_{2,0} = 0, P^1Q_{2,1} = Q_{2,0}$. For $1< r<p$, $P^rL_{2,s} = 0$  and $P^rL_2 =0$. Using the Cartan formula and Corollary \ref{hq2.6}, we have
$$ 0=P^rL_{2,s} = P^r(L_2Q_{2,s}) = L_2P^rQ_{2,s}.$$
This implies $P^rQ_{2,s} = 0$. So the theorem holds for $k=0$ and any $0\leqslant r <p$. Suppose $0<k<p$ and the theorem is true for $k-1$ and any $0\leqslant r <p$. Using the Cartan formula, Corollary \ref{hq2.6} and the inductive hypothesis, we get
\begin{align*}
0 &= P^iL_{2,s} = P^i(L_2Q_{2,s})\\ 
&= L_2P^iQ_{2,s} + L_2Q_{2,1}P^{i-p}Q_{2,s}+L_2Q_{2,0}P^{i-p-1}Q_{2,0}\\
&=L_2P^iQ_{2,s} + L_2Q_{2,1}(-1)^{k-1}\binom{k-1+s}rQ_{2,0}^{r+1-s}Q_{2,1}^{k+s-r-1}\\
&\hskip4cm+L_2Q_{2,0}(-1)^{k-1}\binom{k-1+s}{r-1}Q_{2,0}^{r-s}Q_{2,1}^{k+s-r}\\
&=L_2P^iQ_{2,s}+ (-1)^{k-1}\Big(\binom{k-1+s}r +\binom{k-1+s}{r-1}\Big)L_2Q_{2,0}^{r+1-s}Q_{2,1}^{k+s-r}.
\end{align*}

From this equality and the relation $\binom{k-1+s}r +\binom{k-1+s}{r-1}=\binom {k+s}r$, we see that the theorem holds for $k$. The proof is completed.
\end{proof}

To compute the action of $P^i$ on $R_{2;0}, R_{2;1}, R_{2;0,1}$ we need the following
\begin{lem}\label{bd3.2} Let $i$ be a nonnegative integer. Then we have
$$P^iL_2^{p-2} = \begin{cases} (-1)^k(k+1)\binom kr L_2^{p-2}Q_{2,0}^rQ_{2,1}^{k-r}, &i=kp+r, 0\leqslant r \leqslant k < p,\\
0, &\text{otherwise}.
\end{cases}$$
\end{lem}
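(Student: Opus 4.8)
The plan is to compute $P^i L_2^{p-2}$ by exploiting the known action of $P^i$ on $L_2$ itself, given in Corollary \ref{hq2.6}, together with the Cartan formula and induction on the exponent. Rather than iterate Cartan $(p-2)$ times, I would set up a single induction on the degree parameter $k$ exactly as in the proof of Theorem \ref{dl3.1}, since $L_2^{p-2}$ plays a role entirely analogous to $Q_{2,s}$ there. First I would record the degree bound: $\deg L_2 = 2(p+1)$ (as $L_2 = [0,1] = y_1^py_2 - y_1y_2^p$ has degree $2(p+1)$), so $\deg L_2^{p-2} = 2(p-2)(p+1)$, and more usefully $L_2^{p-2}$ lives in a range that forces $P^i L_2^{p-2} = 0$ once $i$ exceeds the available degree, justifying the restriction to $i = kp+r$ with $0 \le r \le k < p$ after the $p$-adic expansion $i = kp + r$, $0 \le k, r < p$.

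The core computation would come from applying $P^i$ to the product $L_2 \cdot L_2^{p-3}$, or more cleanly from the relation $P^i(L_2^{p-1}) = P^i(L_2 \cdot L_2^{p-2})$ and using that $L_2^{p-1}$ has a simpler image. My preferred route is to mimic Theorem \ref{dl3.1}: apply Cartan to $L_2^{p-2} = L_2 \cdot L_2^{p-3}$ is awkward since the induction is on the power, so instead I would induct on $k$ directly. Writing $P^i(L_2 \cdot L_2^{p-3})$ and substituting Corollary \ref{hq2.6} for the $P^jL_2$ factor ($j \in \{0,p,p+1\}$) reduces the computation to lower-power terms; but the cleanest formulation uses $L_2^{p-2}$ as a single entity whose $P^i$-action I determine by comparing with a known multiple. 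Concretely, since $L_2^{p-1}$ appears inside invariants whose behavior is already controlled, I would compute $P^i L_2^{p-1}$ first (it factors through $L_2 \cdot L_2^{p-2}$), and then divide.

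The key algebraic step is the binomial identity that packages the Cartan contributions. As in Theorem \ref{dl3.1}, the three nonzero values of $P^jL_2$ (at $j=0,\,p,\,p+1$, producing factors $1,\,Q_{2,1},\,Q_{2,0}$) generate, via the Cartan formula and the inductive hypothesis for $k-1$, three terms with coefficients $\binom{k-1}{r}$, $\binom{k-1}{r-1}$ (from the $Q_{2,1}$ and $Q_{2,0}$ shifts), plus the leading $P^iL_2^{p-2}$ term. I expect the multiplicities $(k+1)$ versus $k$ to combine through an identity of the shape $k\binom{k-1}{r} + k\binom{k-1}{r-1} + (\text{correction}) = (k+1)\binom{k}{r}$, using Pascal's rule $\binom{k-1}{r} + \binom{k-1}{r-1} = \binom{k}{r}$ together with the absorption identity $(k+1)\binom{k}{r} = \ldots$ to track the extra factor of $(k+1)$. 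The main obstacle will be bookkeeping this coefficient $(k+1)$ correctly: unlike the pure binomial recursion in Theorem \ref{dl3.1}, here the power $L_2^{p-2}$ contributes a derivative-like multiplicity, and I must verify that the self-interaction term (where $P^j$ hits one $L_2$ factor and the rest of $L_2^{p-3}$ carries the remaining operation) supplies exactly the $(k+1)$ rather than $k$, which is where the computation is most error-prone and where the characteristic-$p$ reductions $\binom{p^e}{i}=0$ must be invoked carefully.
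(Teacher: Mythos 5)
Your overall strategy is the paper's: record the degree bound, write $i=kp+r$, induct on $k$, and extract $P^iL_2^{p-2}$ from a Cartan expansion of $P^i(L_2\cdot L_2^{p-2})$, cancelling $L_2$ at the end. But as written the plan has a hole at its center: you never say \emph{why} $P^iL_2^{p-1}$ is ``already controlled.'' It is not controlled by any formal property of powers; it is controlled because of the Dickson relation
$$Q_{2,0}=L_{2,0}/L_2=[1,2]/[0,1]=[0,1]^{p-1}=L_2^{p-1}$$
(using $[1,2]=[0,1]^p$, by the Frobenius), so that Theorem \ref{dl3.1} supplies $P^iL_2^{p-1}=P^iQ_{2,0}=(-1)^k\binom krQ_{2,0}^{r+1}Q_{2,1}^{k-r}$. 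This identification is the linchpin of the paper's proof, and it is the only viable entry point: the other products containing $L_2^{p-2}$, namely the $R_{2;s}$, are handled only later (Theorem \ref{dl3.3}) \emph{using} this lemma. Without stating the relation, your sentence ``compute $P^iL_2^{p-1}$ first (it factors through $L_2\cdot L_2^{p-2}$), and then divide'' is circular: the factorization alone expresses the unknown in terms of itself.

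Once the relation is made explicit, the worries in your last paragraph dissolve. There is no ``self-interaction'' or derivative-like multiplicity to track, because one never expands a power of $L_2$ factor by factor: applying the Cartan formula to $Q_{2,0}=L_2\cdot L_2^{p-2}$, with Corollary \ref{hq2.6} giving $P^jL_2\ne 0$ only for $j=0,p,p+1$, yields
$$(-1)^k\binom krQ_{2,0}^{r+1}Q_{2,1}^{k-r}=L_2P^iL_2^{p-2}+L_2Q_{2,1}P^{i-p}L_2^{p-2}+L_2Q_{2,0}P^{i-p-1}L_2^{p-2}.$$
The last two terms are known by the inductive hypothesis for $k-1$ (both carry the factor $(-1)^{k-1}k$), and Pascal's rule $\binom{k-1}r+\binom{k-1}{r-1}=\binom kr$ merges them into $(-1)^{k-1}k\binom krQ_{2,0}^{r+1}Q_{2,1}^{k-r}$; solving gives $L_2P^iL_2^{p-2}=(-1)^k(1+k)\binom krQ_{2,0}^{r+1}Q_{2,1}^{k-r}$. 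So the coefficient $k+1$ is literally $1+k$: the $1$ comes from the known value of $P^iQ_{2,0}$ and the $k$ from the inductive terms; your conjectured identity holds with ``correction'' equal to $\binom kr$, and no characteristic-$p$ binomial subtleties beyond Pascal's rule enter. The base case is the same equation: for $0<r<p$ one gets $0=P^rQ_{2,0}=L_2P^rL_2^{p-2}$, and cancelling $L_2$ is legitimate since all terms lie in the integral domain $P(y_1,y_2)$. One last small correction: the degree bound only forces vanishing for $i\geqslant p^2$, i.e.\ it justifies $0\leqslant k,r<p$; the vanishing when $r>k$ is not a degree phenomenon but falls out of the same recursion, since $\binom kr=0$ there.
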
 
\begin{proof} Note that $\deg L_2^{p-2} = 2(p-2)(p+1) <2p^2$. So $P^iL_2^{p-2} = 0$ for $i \geqslant p^2$. Hence, it suffices to prove the theorem for $i=kp+r$ with $0\leqslant k,r < p$.

 Since $P^0 = 1$, we have $P^0L_2^{p-2} = L_2^{p-2}$. If $0< r <p$ then from Theorem \ref{dl3.1}, the Cartan formula and the relation  $Q_{2,0} = L_2^{p-1}= L_2L_2^{p-2}$, we get 
$$0= P^rQ_{2,0} = L_2P^rL_2^{p-2}.$$
 This implies $P^rL_2^{p-2} =0$. The lemma is true for $k=0$ and $0\leqslant r <p$.

Suppose that $0<k<p$ and the lemma holds for $k-1$ and any $0\leqslant r <p$. Using the Cartan formula, Theorem \ref{dl3.1}, Corollary \ref{hq2.6} and the inductive hypothesis, we have
\begin{align*} (-1)^k\binom kr Q_{2,0}^{r+1}Q_{2,1}^{k-r} &= P^iQ_{2,0} = P^i(L_2L_2^{p-2})\\
&= L_2P^iL_2^{p-2} + L_2Q_{2,1}P^{i-p}L_2^{p-2}+ L_2Q_{2,0}P^{i-p-1}L_2^{p-2}\\ 
&=L_2P^iL_2^{p-2} + L_2Q_{2,1}(-1)^{k-1}k\binom{k-1}rL_2^{p-2}Q_{2,0}^{r}Q_{2,1}^{k-r-1}\\
&\hskip2cm+ L_2Q_{2,0}(-1)^{k-1}k\binom{k-1}{r-1}L_2^{p-2}Q_{2,0}^{r-1}Q_{2,1}^{k-r}\\
&= L_2P^iL_2^{p-2} + (-1)^{k-1}k\Big(\binom{k-1}r +\binom{k-1}{r-1}\Big)Q_{2,0}^{r+1}Q_{2,1}^{k-r}.
\end{align*}
This equality and the relation $\binom{k-1}r+\binom{k-1}{r-1}=\binom kr$ imply the lemma for $k$ and any $0\leqslant r <p$.
\end{proof}
\begin{thm}\label{dl3.3} Let $i$ be a nonnegative integer. We have 
$$ P^iR_{2;0} =\begin{cases}  (-1)^k\big((r+1)\binom krR_{2;0}Q_{2,0}^{r}Q_{2,1}^{k-r}
+k\binom{k-1}rR_{2;1}Q_{2,0}^{r+1}Q_{2,1}^{k-r-1}\big),\\
\hskip5cm i=kp+r, 0\leqslant r \leqslant k < p,\\
0,  \hskip4.6cm\text{otherwise}.
\end{cases}$$
\end{thm}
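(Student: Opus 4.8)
The plan is to obtain $P^iR_{2;0}$ by expanding the product $R_{2;0}=M_{2;0}L_2^{p-2}$ through the Cartan formula, since the action of $P^\bullet$ on each factor is already available: Corollary \ref{hq2.8} records $P^\bullet M_{2;0}$ and Lemma \ref{bd3.2} records $P^\bullet L_2^{p-2}$. I would first dispose of the large-$i$ range. Since $\deg R_{2;0}=(2p+1)+2(p-2)(p+1)=2p^2-3<2p^2$, the instability property (for $S=\emptyset,\,R=(i)$ it reads $P^iz=0$ whenever $\deg z<2i$) forces $P^iR_{2;0}=0$ for all $i\geqslant p^2$. It then remains to treat $i<p^2$, which I write as $i=kp+r$ with $0\leqslant k,r<p$.

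Now apply the Cartan formula. Because Corollary \ref{hq2.8} makes $P^j M_{2;0}$ vanish except at $j=0$ (value $M_{2;0}$) and $j=p$ (value $M_{2;0}Q_{2,1}-M_{2;1}Q_{2,0}$), the sum collapses to exactly two terms:
$$P^iR_{2;0} = M_{2;0}\,P^iL_2^{p-2} + (M_{2;0}Q_{2,1}-M_{2;1}Q_{2,0})\,P^{i-p}L_2^{p-2}.$$
Here $i-p=(k-1)p+r$ when $k\geqslant 1$, while for $k=0$ the exponent $i-p=r-p$ is negative and $P^{i-p}=0$, so only the first term survives. Substituting Lemma \ref{bd3.2} and using $M_{2;0}L_2^{p-2}=R_{2;0}$ and $M_{2;1}L_2^{p-2}=R_{2;1}$, the only Dickson-invariant monomials that can appear are $R_{2;0}Q_{2,0}^rQ_{2,1}^{k-r}$ and $R_{2;1}Q_{2,0}^{r+1}Q_{2,1}^{k-r-1}$.

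Next I would check the vanishing cases hidden inside $i<p^2$, namely $r>k$. There Lemma \ref{bd3.2} gives $P^iL_2^{p-2}=0$ (its nonzero range requires $r\leqslant k$), and for the second term $i-p=(k-1)p+r$ has $r>k>k-1$, so $P^{i-p}L_2^{p-2}=0$ as well; hence $P^iR_{2;0}=0$, matching the ``otherwise'' branch.

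Finally, for $0\leqslant r\leqslant k<p$ I would collect coefficients. The $R_{2;1}$ monomial arises only from the $-M_{2;1}Q_{2,0}$ piece of the second term and carries coefficient $(-1)^{k}k\binom{k-1}{r}$, which is already the claimed one. The $R_{2;0}$ monomial gathers $(-1)^k(k+1)\binom{k}{r}$ from the first term and $(-1)^{k-1}k\binom{k-1}{r}=-(-1)^kk\binom{k-1}{r}$ from the second, so the crux is the elementary identity $(k+1)\binom{k}{r}-k\binom{k-1}{r}=(r+1)\binom{k}{r}$, a direct consequence of $k\binom{k-1}{r}=(k-r)\binom{k}{r}$; this rewrites the coefficient as $(-1)^k(r+1)\binom{k}{r}$. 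The boundary value $r=k$ is handled automatically since $\binom{k-1}{k}=0$ suppresses the second term, and $k=0$ reduces to $P^0R_{2;0}=R_{2;0}$. I do not anticipate a genuine obstacle: the only point demanding care is the bookkeeping of the degree shift $i\mapsto i-p$ in the second Cartan term together with the sign $(-1)^{k-1}=-(-1)^k$, which must be tracked correctly for the binomial identity to line up.
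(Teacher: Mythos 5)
Your proposal is correct and takes essentially the same route as the paper's proof: dispose of $i\geqslant p^2$ by instability, expand $R_{2;0}=M_{2;0}L_2^{p-2}$ via the Cartan formula so that only the $j=0$ and $j=p$ terms of Corollary \ref{hq2.8} survive, substitute Lemma \ref{bd3.2}, and finish with the identity $(k+1)\binom{k}{r}-k\binom{k-1}{r}=(r+1)\binom{k}{r}$. Your explicit handling of the $r>k$ vanishing case and your one-line derivation of the binomial identity from $k\binom{k-1}{r}=(k-r)\binom{k}{r}$ are minor refinements of details the paper leaves implicit, not a different argument.
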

\begin{proof} Note that  $\deg R_{2;0} = 2p^2-3 < 2p^2$. So $P^iR_{2;0} = 0$ for $i \geqslant p^2$. We prove the theorem for $i=kp+r$ with $0 \leqslant k,r <p$. 

For $k=r=0$, $P^0R_{2;0} = R_{2;0}$. For $k= 0, 0<r<p$, applying the Cartan formula and Corollary \ref{hq2.8}, we get
$$ P^rR_{2;0} = P^r(M_{2;0}L_2^{p-2}) = M_{2;0}P^rL_2^{p-2}=0.$$
The theorem holds for $k=0$ and $0\leqslant r <p$. 

Suppose that $0<k < p$. Using the Cartan formula, Corollary \ref{hq2.8} and Lemma \ref{bd3.2}, we obtain
\begin{align*}
P^iR_{2;0} &= P^i(M_{2;0}L_2^{p-2}) = P^0M_{2;0}P^iL_2^{p-2} + P^pM_{2;0}P^{i-p}L_2^{p-2}\\
&= M_{2;0}P^iL_2^{p-2} + (M_{2;0}Q_{2,1} - M_{2;1}Q_{2,0})P^{i-p}L_2^{p-2}\\
&= M_{2;0}(-1)^k(k+1)\binom kr L_2^{p-2}Q_{2,0}^rQ_{2,1}^{k-r}\\
&\quad + (M_{2;0}Q_{2,1} - M_{2;1}Q_{2,0})(-1)^{k-1}k\binom {k-1}rL_2^{p-2}Q_{2,0}^rQ_{2,1}^{k-r-1}\\
&= (-1)^k\Big(\big((k+1)\binom kr - k\binom{k-1}r\big)R_{2;0}Q_{2,0}^rQ_{2,1}^{k-r}\\ &\hskip5cm + k\binom{k-1}rR_{2;1}Q_{2,0}^{r+1}Q_{2,1}^{k-r-1}\Big).
\end{align*}

This equality and the relation $(k+1)\binom kr - k\binom{k-1}r = (r+1)\binom kr$ imply the theorem for $k$ and $0\leqslant r <p$.
\end{proof}

By an analogous argument as given in the proof of Theorem \ref{dl3.3}, we can easily obtain the following
\begin{thm} For any nonnegative integer $i$, we have
\begin{align*}P^iR_{2;1} &=\begin{cases}  (-1)^k(k+1)\big(\binom krR_{2;1}Q_{2,0}^{r}Q_{2,1}^{k-r}
+\binom{k}{r-1}R_{2;0}Q_{2,0}^{r-1}Q_{2,1}^{k-r+1}\big),\\
\hskip5cm i=kp+r, 0\leqslant r \leqslant k < p,\\
0,  \hskip4.6cm\text{otherwise},
\end{cases}\\
P^iR_{2;0,1} &= \begin{cases} (-1)^k(k+1)\binom krR_{2;0,1}Q_{2,0}^rQ_{2,1}^{k-r}, &i=kp+r, 0\leqslant r \leqslant k <p,\\
0, &\text{otherwise}.
\end{cases}
\end{align*}
\end{thm}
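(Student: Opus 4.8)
The plan is to mirror exactly the inductive strategy that carried Theorem \ref{dl3.3}, since the final statement concerns the two remaining Mui invariants $R_{2;1}=M_{2;1}L_2^{p-2}$ and $R_{2;0,1}=M_{2;0,1}L_2^{p-2}$, which differ from $R_{2;0}$ only in the exterior factor. First I would record the degree bounds: $\deg R_{2;1}=2p^2-3$ and $\deg R_{2;0,1}=2p^2-2$ are both strictly less than $2p^2$, so $P^iR_{2;1}=P^iR_{2;0,1}=0$ whenever $i\geqslant p^2$; this reduces everything to writing $i=kp+r$ with $0\leqslant k,r<p$. The base case $k=0$ is immediate: $P^0$ is the identity, and for $0<r<p$ the Cartan formula together with Corollary \ref{hq2.8} (which gives $P^rM_{2;1}=0$ and $P^rM_{2;0,1}=0$ for such $r$) and Lemma \ref{bd3.2} (which gives $P^rL_2^{p-2}=0$) forces $P^r(M_{2;1}L_2^{p-2})=0$ and likewise for $M_{2;0,1}L_2^{p-2}$, matching the claimed formulas at $k=0$.

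For the inductive step I would fix $0<k<p$ and expand $P^i(M_{2;1}L_2^{p-2})$ by the Cartan formula. The key simplification is that Corollary \ref{hq2.8} shows $P^jM_{2;1}$ is nonzero only for $j=0$ (giving $M_{2;1}$) and $j=1$ (giving $M_{2;0}$), so the sum collapses to just two surviving terms:
\begin{align*}
P^iR_{2;1} &= M_{2;1}P^iL_2^{p-2} + M_{2;0}P^{i-1}L_2^{p-2}.
\end{align*}
I would then substitute Lemma \ref{bd3.2} for both $P^iL_2^{p-2}$ (with exponent decomposition $i=kp+r$) and $P^{i-1}L_2^{p-2}$ (where $i-1=kp+(r-1)$, so the relevant data are $k$ and $r-1$), and rewrite $M_{2;1}L_2^{p-2}=R_{2;1}$, $M_{2;0}L_2^{p-2}=R_{2;0}$. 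This produces the claimed combination of $R_{2;1}Q_{2,0}^rQ_{2,1}^{k-r}$ and $R_{2;0}Q_{2,0}^{r-1}Q_{2,1}^{k-r+1}$ with coefficients already in the stated $(-1)^k(k+1)$ form, so unlike Theorem \ref{dl3.3} there is no Pascal-type recombination to perform here — the formula comes out directly because $P^{i-1}$ is already the correct lower operation rather than a shifted one requiring the induction.

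The computation for $R_{2;0,1}=M_{2;0,1}L_2^{p-2}$ is even cleaner: Corollary \ref{hq2.8} does not list $M_{2;0,1}=x_1x_2$, but since $x_1x_2$ is a product of degree-one exterior generators, $P^jM_{2;0,1}=0$ for all $j>0$ and $P^0M_{2;0,1}=M_{2;0,1}$ (one checks this directly from $P^i(x_k)=x_k$ for $i=0$ and $0$ otherwise together with the Cartan formula). Hence $P^iR_{2;0,1}=M_{2;0,1}P^iL_2^{p-2}$, and a single application of Lemma \ref{bd3.2} with $i=kp+r$ yields $(-1)^k(k+1)\binom{k}{r}R_{2;0,1}Q_{2,0}^rQ_{2,1}^{k-r}$ on the nose. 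I do not anticipate a genuine obstacle; the only point demanding care is bookkeeping the index shift $i-1=kp+(r-1)$ in the $R_{2;1}$ case, where the boundary value $r=0$ makes $\binom{k}{r-1}=\binom{k}{-1}=0$ by the stated convention, correctly killing the $R_{2;0}$ term and keeping the formula consistent. Because the argument is so closely parallel to Theorem \ref{dl3.3}, I would expect to present it tersely, indicating the two collapsing sums and citing the relevant lemmas rather than reproducing every intermediate monomial.
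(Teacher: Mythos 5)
Your handling of $R_{2;0,1}$ is correct ($P^j(x_1x_2)=0$ for $j>0$, so only $M_{2;0,1}P^iL_2^{p-2}$ survives, and Lemma \ref{bd3.2} finishes), and the Cartan expansion you write in the inductive step for $R_{2;1}$, namely $P^iR_{2;1}=M_{2;1}P^iL_2^{p-2}+M_{2;0}P^{i-1}L_2^{p-2}$, is exactly the identity that the paper's ``analogous argument'' refers to (surviving terms $j=0,1$ in place of $j=0,p$ from Theorem \ref{dl3.3}). The genuine gap is in your base case for $R_{2;1}$: you assert that Corollary \ref{hq2.8} gives $P^rM_{2;1}=0$ for all $0<r<p$, but that corollary states $P^1M_{2;1}=M_{2;0}\neq 0$. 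Consequently $P^1R_{2;1}=M_{2;1}P^1L_2^{p-2}+M_{2;0}P^0L_2^{p-2}=M_{2;0}L_2^{p-2}=R_{2;0}\neq 0$, contradicting the vanishing you claim at $k=0$, $r=1$. The two halves of your own argument are inconsistent: the expansion you use for $k>0$, specialized to $k=0$, $r=1$, already produces $R_{2;0}$.

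This is not a repairable slip of bookkeeping, because the same boundary term survives for every $k$: if $i=kp+(k+1)$ with $k+1<p$, then $\binom{k}{k+1}=0$ kills the first term, but $i-1=kp+k$ and Lemma \ref{bd3.2} give $P^iR_{2;1}=(-1)^k(k+1)R_{2;0}Q_{2,0}^k\neq 0$. So the ``$0$ otherwise'' clause fails precisely at $r=k+1$; your method, executed correctly, shows that the displayed formula for $P^iR_{2;1}$ is valid for all $i=kp+r$ with $0\leqslant k,r<p$ under the convention $\binom kr=0$ for $r>k$, and that vanishing holds only for $r>k+1$ (or $i\geqslant p^2$). Compare Theorem \ref{dl3.1}, where the analogous index shift for $Q_{2,1}$ is reflected in the condition $0\leqslant r-s\leqslant k$, which does allow $r=k+1$ when $s=1$; the $R_{2;1}$ formula needs the same widening, and your false base-case claim is exactly what hides this. (Two immaterial slips: $\deg R_{2;1}=2(p^2-p)-1$ and $\deg R_{2;0,1}=2(p^2-p-1)$, not $2p^2-3$ and $2p^2-2$; both are still less than $2p^2$, so your reduction to $i<p^2$ stands.)
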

\section{On the action of the Steenrod-Milnor operations\\ on Dickson-Mui invariants}
 In this section, we compute the action of $St^{(s),(i)}$ on Dickson-Mui invariants. It is easy to see that $St^{(s),(i)}Q_{2,s} = 0$. So we need only to compute the action of $St^{(s),(i)}$ on $R_{2;0}, R_{2;1}$ and $R_{2;0,1}$.

First, we recall the following
\begin{lem}[Sum \cite{s1}]\label{bd4.1}  For any nonnegative integers $s, i$,
\begin{align*}St^{(s),(i)}(M_{2;0}) &= \begin{cases} {[s,1]}, &i=0,\\
{[s,2]}, &i = p,\\
0, &\text{otherwise,}\end{cases}\ \
St^{(s),(i)}(M_{2;1}) = \begin{cases} {[s,0]}, &i=0,\\
{[s,1]}, &i = 1,\\
0, &\text{otherwise.}\end{cases}\end{align*}
\end{lem}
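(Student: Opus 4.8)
The plan is to recognize both formulas as immediate specializations of Lemma~\ref{bd2.7}. By the definitions in Section~2 we have $M_{2;0} = [1;1]$ and $M_{2;1} = [1;0]$, so each quantity is of the form $[1;u]$, with $u=1$ and $u=0$ respectively. First I would substitute $u=1$ into Lemma~\ref{bd2.7}: since then $p^u = p$, this yields $St^{(s),(i)}(M_{2;0}) = [s,1]$ for $i=0$, $[s,2]$ for $i=p$, and $0$ otherwise, which is exactly the first stated formula. Substituting $u=0$ gives $p^u = 1$, so $St^{(s),(i)}(M_{2;1}) = [s,0]$ for $i=0$, $[s,1]$ for $i=1$, and $0$ otherwise, which is the second formula. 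In this reading the lemma is a direct corollary of Lemma~\ref{bd2.7}, exactly parallel to the way Corollary~\ref{hq2.8} records the $S=\emptyset$ case.

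Should one prefer a self-contained argument, I would compute directly from the determinant expansions $[1;1] = x_1 y_2^p - x_2 y_1^p$ and $[1;0] = x_1 y_2 - x_2 y_1$, handling each monomial by Lemma~\ref{bd2.2}. For $S=(s)$ and $R=(i)$ the coefficient $\binom{b}{R}$ reduces to the ordinary binomial coefficient $\binom{b}{i}$ and $|R| = (p-1)i$, so for a general second row $p^u$ Lemma~\ref{bd2.2} gives
\begin{align*}
St^{(s),(i)}(x_1 y_2^{p^u}) &= \binom{p^u}{i}\, y_1^{p^s} y_2^{\,p^u + (p-1)i},\\
St^{(s),(i)}(x_2 y_1^{p^u}) &= \binom{p^u}{i}\, y_2^{p^s} y_1^{\,p^u + (p-1)i}.
\end{align*}
Subtracting these two and collecting terms reassembles a determinant of the shape $[s,\ast]$, the exponent in the bottom row being dictated by the value of $i$.

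The only arithmetic input is the standard fact that $\binom{p^u}{i} \equiv 0 \pmod p$ for $0 < i < p^u$, together with $\binom{p^u}{0} = \binom{p^u}{p^u} = 1$. Hence the operator survives only for $i=0$ and $i=p^u$. In the first case the exponent $p^u + (p-1)\cdot 0 = p^u$ reproduces $[s,u]$, and in the second case $p^u + (p-1)p^u = p^{u+1}$ reproduces $[s,u+1]$; specializing to $u=1$ and $u=0$ then recovers the two stated formulas. I do not anticipate any genuine obstacle here: once the identifications $M_{2;0}=[1;1]$ and $M_{2;1}=[1;0]$ are made, everything collapses to the same vanishing of binomial coefficients modulo $p$ already exploited to obtain Lemma~\ref{bd2.7} and Corollary~\ref{hq2.8}, so the verification is purely computational.
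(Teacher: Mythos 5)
Your proposal is correct and in substance coincides with the paper's proof: the paper disposes of this lemma with the one-line remark that it follows from the Cartan formula and Lemma~\ref{bd2.2}, which is exactly your self-contained computation, while your primary route---reading off the two cases as the $u=1$ and $u=0$ specializations of Lemma~\ref{bd2.7}---is the same computation already packaged there, since Lemma~\ref{bd2.7} was itself obtained from Lemma~\ref{bd2.2}, Corollary~\ref{hq2.4} and the Cartan formula. Both boil down to the vanishing of $\binom{p^u}{i}$ modulo $p$ for $0<i<p^u$, so there is nothing to add.
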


This lemma can easily be proved by using the Cartan formula and Lemma \ref{bd2.2}.

\begin{thm}\label{dl4.2} Let $s, i$ be nonnegative integers. Then we have
$$St^{(s),(i)}R_{2;0} =\begin{cases}  (-1)^k(r+1)\binom krQ_{2,0}^{k+1}Q_{2,1}^{k-r}, \ \ \ \ s=0,  i= kp+r, 0\leqslant r \leqslant k < p,\\
(-1)^{k+1}k\binom{k-1}{r}Q_{2,0}^{r+2}Q_{2,1}^{k-1-r}, \  \ \ s=1, i=kp+r, 0\leqslant r < k < p,\\
(-1)^{k+1}(k+1)\binom{k}{r}Q_{2,0}^{r+2}Q_{2,1}^{k-r},\   s=2, i=kp+r, 0\leqslant r \leqslant k < p,\\ 
\displaystyle{(-1)^{k+1}\binom kr\Big((k+1)\sum_{a\in I(1,s)}(-1)^aQ_{2,0}^{pa+r+2}Q_{2,1}^{\frac{p^{s-1}-p}{p-1}-(p+1)a+k-r}}\\
\qquad-\displaystyle{(k-r)\sum_{a\in I(2,s)}(-1)^aQ_{2,0}^{p(a+1)+r+2}Q_{2,1}^{\frac{p^{s-1}-p^2}{p-1}-(p+1)a+k-r-1}}\Big),\\
\hskip5.4cm s >2, i =kp+r,  0\leqslant r \leqslant k < p,\\ 
0, \hskip5cm \text{otherwise}.
\end{cases}$$
\end{thm}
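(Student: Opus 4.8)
The plan is to exploit the factorization $R_{2;0}=M_{2;0}L_2^{p-2}$ and push $St^{(s),(i)}$ through it with the Cartan formula, taking $z=M_{2;0}$ and $t=L_2^{p-2}$. The decisive observation is that $t=L_2^{p-2}$ lies in the polynomial part $P(y_1,y_2)$ and carries no exterior factor; hence by Lemma \ref{bd2.2} every operation $St^{(s),R_2}$ with nonempty upper index $S_2=(s)$ annihilates it. Consequently only the splitting $S_1=(s),\ S_2=\emptyset$ survives in the Cartan expansion, and there both the sign $(-1)^{(\deg z+\ell(S_1))\ell(S_2)}$ and the coefficient $(S:S_1,S_2)$ equal $1$ since $\ell(S_2)=0$. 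Writing $R_1=(j),\ R_2=(i-j)$ and recalling $St^{\emptyset,(i-j)}=P^{i-j}$, I obtain
\[
St^{(s),(i)}R_{2;0}=\sum_{j=0}^{i}St^{(s),(j)}(M_{2;0})\,P^{i-j}L_2^{p-2}.
\]

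By Lemma \ref{bd4.1} the factor $St^{(s),(j)}(M_{2;0})$ is nonzero only for $j=0$, where it equals $[s,1]$, and for $j=p$, where it equals $[s,2]$. Thus the entire computation collapses to
\[
St^{(s),(i)}R_{2;0}=[s,1]\,P^{i}L_2^{p-2}+[s,2]\,P^{i-p}L_2^{p-2}.
\]
Since $\deg R_{2;0}=2p^2-3<2p^2$, the left side vanishes for $i\geqslant p^2$, so I may assume $i=kp+r$ with $0\leqslant k,r<p$. I then substitute the explicit formula of Lemma \ref{bd3.2}, using $i-p=(k-1)p+r$ for the second Steenrod power; note that this second term is present only when $0\leqslant r<k$, which is exactly the range restriction appearing in the $s=1$ line. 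What remains is to evaluate the two determinants $[s,1]$ and $[s,2]$ and to use $L_2^{p-1}=Q_{2,0}$ repeatedly.

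For $s\leqslant 2$ the determinants are elementary: by antisymmetry $[1,1]=[2,2]=0$, while $[0,1]=L_2$, $[0,2]=L_2Q_{2,1}$, $[1,2]=L_2Q_{2,0}$ and $[2,1]=-L_2Q_{2,0}$. Feeding these in and simplifying with $L_2\cdot L_2^{p-2}=Q_{2,0}$ together with the binomial identities $k\binom{k-1}{r}=(k-r)\binom{k}{r}$ and $(k+1)\binom{k}{r}-k\binom{k-1}{r}=(r+1)\binom{k}{r}$ produces the first three cases directly. For $s>2$ I use antisymmetry once more, $[s,1]=-[1,s]$ and $[s,2]=-[2,s]$, and expand both determinants by Proposition \ref{md2.9} with $(u,v)=(1,s)$ and $(2,s)$, which introduces the sums over the index sets $I(1,s)$ and $I(2,s)$.

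The step I expect to be the main obstacle is the exponent bookkeeping in this last case: each monomial coming from Proposition \ref{md2.9} carries a power of $L_2$ that must be merged with $L_2^{p-2}$ and converted into a power of $Q_{2,0}$. Concretely I must verify the divisibilities $p+p(p-1)a+(p-2)=(p-1)(pa+2)$ and $p^2+p(p-1)a+(p-2)=(p-1)\big(p(a+1)+2\big)$, which give $L_2^{\,p+p(p-1)a}L_2^{p-2}=Q_{2,0}^{pa+2}$ and $L_2^{\,p^2+p(p-1)a}L_2^{p-2}=Q_{2,0}^{p(a+1)+2}$, and then absorb the extra $Q_{2,0}^{r}$ and $Q_{2,1}^{k-r}$ (respectively $Q_{2,1}^{k-r-1}$) coming from Lemma \ref{bd3.2}. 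Collecting signs, pulling out the common factor $(-1)^{k+1}\binom{k}{r}$, and rewriting $k\binom{k-1}{r}$ as $(k-r)\binom{k}{r}$ should reassemble the two sums over $I(1,s)$ and $I(2,s)$ with coefficients $(k+1)$ and $-(k-r)$ exactly as stated; keeping track of the shifts in the $Q_{2,1}$-exponents through the term $-(p+1)a$ is the delicate point to control.
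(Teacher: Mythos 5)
Your proposal is correct and follows essentially the same route as the paper: the Cartan formula applied to $R_{2;0}=M_{2;0}L_2^{p-2}$ (with the terms having $S_2=(s)$ killed on the polynomial factor), Lemma \ref{bd4.1} and Lemma \ref{bd3.2} for the two surviving terms $[s,1]P^iL_2^{p-2}+[s,2]P^{i-p}L_2^{p-2}$, then antisymmetry and Proposition \ref{md2.9} to convert $[s,1]$ and $[s,2]$ into Dickson invariants; the paper differs only cosmetically, treating $k=0$ separately and leaving the final exponent bookkeeping implicit. One remark: your simplification in the $s=0$ case yields $Q_{2,0}^{r+1}$ (as degree considerations force), so the exponent $k+1$ printed in the statement is a typo in the paper rather than a defect of your argument.
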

\begin{proof} Since $\deg R_{2;0} = 2p^2- 3$, $St^{(s),(i)}R_{2;0} = 0$ for $i\geqslant p^2$. Suppose $i<p^2$, then using the $p$-adic expansion of $i$, we have
$i=kp+r\ \text{ for}\ 0\leqslant  k,\ r<p .$
 
We have $St^{(s),(0)}R_{2;0} = St^{(s),(0)}(M_{2;0}L_2^{p-2}) = St^{(s),(0)}M_{2;0}P^0L_2^{p-2}=[s,1]L_2^{p-2}$. For $0<r<p$, using the Cartan formula, Lemma \ref{bd3.2} and Lemma \ref{bd4.1}, we get
$$St^{(s),(r)}R_{2;0} = St^{(s),(r)}(M_{2;0}L_2^{p-2}) =St^{(s),(0)}(M_{2;0})P^rL_2^{p-2}= 0.$$
The above equalities and Proposition \ref{md2.9} imply the theorem for $k=0$.

For $0<k<p$, using the Cartan formula, Lemma \ref{bd3.2} and Lemma \ref{bd4.1} we obtain
\begin{align*}
St^{(s),(i)}R_{2;0} &= St^{(s),(i)}(M_{2;0}L_2^{p-2})\\
&= St^{(s),(0)}M_{2;0}P^iL_2^{p-2} + St^{(s),(p)}M_{2;0}P^{i-p}L_2^{p-2}\\
&= [s,1](-1)^k(k+1)\binom krL_2^{p-2}Q_{2,0}^rQ_{2,1}^{k-r}\\
&\quad + [s,2](-1)^{k-1}k\binom{k-1}rL_2^{p-2}Q_{2,0}^rQ_{2,1}^{k-r-1}.
\end{align*}

Combining this equality and Proposition \ref{md2.9}, we obtain the theorem.
\end{proof}
\begin{thm} For any nonnegative integers $s, i$,
$$St^{(s),(i)}R_{2;1} =\begin{cases}  (-1)^k(k+1)\binom k{r-1}Q_{2,0}^{r}Q_{2,1}^{k-r+1}, \ \ s=0,  i= kp+r, 0\leqslant r \leqslant k < p,\\
(-1)^{k+1}(k+1)\binom{k}{r}Q_{2,0}^{r+1}Q_{2,1}^{k-r}, \ \ \ s=1, i=kp+r, 0\leqslant r \leqslant k < p,\\
\displaystyle{(-1)^{k+1}(k+1)\Big(\binom kr\sum_{a\in I(0,s)}(-1)^aQ_{2,0}^{pa+r+1}Q_{2,1}^{\frac{p^{s-1}-1}{p-1}-(p+1)a+k-r}}\\
\qquad+\displaystyle{\binom{k}{r-1}\sum_{a\in I(1,s)}(-1)^aQ_{2,0}^{pa+r+1}Q_{2,1}^{\frac{p^{s-1}-p}{p-1}-(p+1)a+k-r+1}}\Big),\\
\hskip5.6cm s >1, i =kp+r,  0\leqslant r \leqslant k < p,\\ 
0, \hskip5.2cm \text{otherwise}.
\end{cases}$$
\end{thm}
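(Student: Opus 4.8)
The plan is to follow verbatim the strategy already used for $R_{2;0}$ in the proof of Theorem~\ref{dl4.2}, simply replacing $M_{2;0}$ by $M_{2;1}$ and tracking the resulting shift in which determinants occur. First I would record the degree bound: since $\deg R_{2;1} = 2p^2 - 2p - 1 < 2p^2$, the fundamental vanishing property of $St^{S,R}$ gives $St^{(s),(i)}R_{2;1} = 0$ for $i \geq p^2$, so it suffices to treat $i = kp + r$ with $0 \leq k, r < p$ obtained from the $p$-adic expansion of $i$.

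The key reduction is the Cartan formula applied to the factorization $R_{2;1} = M_{2;1}L_2^{p-2}$. Because $L_2^{p-2}$ lies in $P(y_1,y_2)$ and carries no exterior variable, Lemma~\ref{bd2.2} forces $St^{(s),R_2}(L_2^{p-2}) = 0$; hence the Milnor index $S=(s)$ must act on the factor $M_{2;1}$, and the Cartan sum collapses (the length of $S_2$ is then $0$, so the Koszul sign is $+1$) to
\[ St^{(s),(i)}R_{2;1} = \sum_{i_1 + i_2 = i} St^{(s),(i_1)}(M_{2;1})\,P^{i_2}(L_2^{p-2}). \]
By Lemma~\ref{bd4.1} only $i_1 = 0$ and $i_1 = 1$ contribute, producing $[s,0]$ and $[s,1]$, so that
\[ St^{(s),(i)}R_{2;1} = [s,0]\,P^i(L_2^{p-2}) + [s,1]\,P^{i-1}(L_2^{p-2}). \]
Substituting Lemma~\ref{bd3.2} for $P^iL_2^{p-2}$ and for $P^{i-1}L_2^{p-2}$ (the convention $\binom{k}{-1}=0$ handles the boundary case $r=0$, where $i-1=(k-1)p+(p-1)$ gives a vanishing binomial) yields a two-term expression in which $(-1)^k(k+1)L_2^{p-2}$, together with $\binom{k}{r}$ attached to $[s,0]$ and $\binom{k}{r-1}$ attached to $[s,1]$, factors out uniformly.

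The final step is to evaluate the determinants $[s,0]$ and $[s,1]$ and to fold their $L_2$-powers into Dickson invariants via $Q_{2,0} = L_2^{p-1}$. A case split on $s$ is exactly what produces the three branches of the statement: for $s=0$ one uses $[0,0]=0$ and $[0,1]=L_2$; for $s=1$ one uses $[1,0]=-[0,1]=-L_2$ and $[1,1]=0$; and for $s>1$ both determinants are generic, so I would invoke Proposition~\ref{md2.9} on $[s,0]=-[0,s]$ (indices $u=0,v=s$) and $[s,1]=-[1,s]$ (indices $u=1,v=s$) to expand them as alternating sums over $I(0,s)$ and $I(1,s)$. In each branch the combination $L_2^{\,p^u + p(p-1)a}\cdot L_2^{p-2} = Q_{2,0}^{\,u+1+pa}$ collapses cleanly, and after multiplying by the surviving power of $Q_{2,0}$ from Lemma~\ref{bd3.2} both sums deliver the common exponent $Q_{2,0}^{\,pa+r+1}$ asserted in the theorem.

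The hard part will not be conceptual but the exponent bookkeeping: one must verify that the $Q_{2,1}$-exponents coming from Proposition~\ref{md2.9}, namely $\tfrac{p^{s-1}-1}{p-1}-(p+1)a$ and $\tfrac{p^{s-1}-p}{p-1}-(p+1)a$, combine with the $Q_{2,1}^{\,k-r}$ and $Q_{2,1}^{\,k-r+1}$ from Lemma~\ref{bd3.2} to give precisely $\tfrac{p^{s-1}-1}{p-1}-(p+1)a+k-r$ and $\tfrac{p^{s-1}-p}{p-1}-(p+1)a+k-r+1$, and that the overall sign $(-1)^{k+1}(k+1)$ and the two binomials factor out term-by-term. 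I would also double-check that the degenerate specializations ($s=0,1$ and $r=0$) reduce to the first two cases correctly, and that the index sets $I(0,s)$ and $I(1,s)$ are exactly those for which all exponents remain nonnegative integers, so that no spurious terms survive.
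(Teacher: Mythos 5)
Your proposal is correct and follows essentially the same route as the paper's own proof: the degree bound, the Cartan-formula collapse of $St^{(s),(i)}(M_{2;1}L_2^{p-2})$ onto the two terms $[s,0]\,P^iL_2^{p-2}+[s,1]\,P^{i-1}L_2^{p-2}$ via Lemma \ref{bd4.1} and Lemma \ref{bd3.2}, and the final conversion to Dickson invariants through Proposition \ref{md2.9}. The only difference is that you make explicit the case split on $s$ (using $[0,0]=0$, $[1,1]=0$, $[s,u]=-[u,s]$) and the exponent bookkeeping, which the paper compresses into its closing sentence.
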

\begin{proof} Since $\deg R_{2;1} = 2(p^2-p) - 1$, $St^{(s),(i)}R_{2;1}=0$ for $i\geqslant p^2$. Suppose $ i < p^2$ and $i=kp+r$ with $0\leqslant k,r <p$. Using the Cartan formula and Lemma \ref{bd4.1}, we have 
$$ St^{(s), (0)}R_{2;1} = St^{(s), (0)}(M_{2;1})L_2^{p-2}={[s,0]}L_2^{p-2}.$$
From this and Proposition \ref{md2.9}, we see that the theorem is true for $i=0$.

For $i>0$, using the Cartan formula, Lemma \ref{bd4.1} and Lemma \ref{bd3.2}, we obtain
\begin{align*}
 St^{(s), (i)}R_{2;1} &= St^{(s), (0)}M_{2;1}P^iL_2^{p-2} + St^{(s),(1)}M_{2;1}P^{i-1}L_2^{p-2}\\
&= {[s,0]}(-1)^{k}(k+1)\binom krL_2^{p-2}Q_{2,0}^rQ_{2,1}^{k-r}\\
&\qquad + {[s,1]}(-1)^{k}(k+1)\binom k{r-1}L_2^{p-2}Q_{2,0}^{r-1}Q_{2,1}^{k-r+1}\\
&= (-1)^{k}(k+1)\Big(\binom kr{[s,0]}L_2^{p-2}Q_{2,0}^rQ_{2,1}^{k-r}\\
&\hskip3cm + \binom k{r-1}{[s,1]}L_2^{p-2}Q_{2,0}^{r-1}Q_{2,1}^{k-r+1}\Big).
\end{align*}

Now the theorem follows from this equality and Proposition \ref{md2.9}.
\end{proof}
\begin{thm} Suppose $s,i$ are nonnegative integers. We have
$$St^{(s),(i)}R_{2;0,1} = \begin{cases}  (-1)^{k+1}(k+1)\binom kr R_{2;1}Q_{2,0}^rQ_{2,1}^{k-r},\\ 
\hskip5cm s=0, i=kp+r, 0\leqslant r\leqslant k<p,\\
 (-1)^{k+1}(k+1)\binom krR_{2;0}Q_{2,0}^rQ_{2,1}^{k-r},\\
\hskip5cm s=1, i=kp+r, 0\leqslant r\leqslant k<p,\\
\displaystyle{(-1)^k(k+1)\binom kr\Big(R_{2;1}\sum_{a\in I(1,s)}(-1)^aQ_{2,0}^{pa+1+r}Q_{2,1}^{\frac{p^{s-1}-p}{p-1}-(p+1)a+k-r}}\\ 
\hskip3cm \displaystyle{- R_{2;0}\sum_{a\in I(0,s)}(-1)^aQ_{2,0}^{pa+r}Q_{2,1}^{\frac{p^{s-1}-1}{p-1}-(p+1)a+k-r}\Big)},\\
\hskip5cm s>1, i=kp+r, 0\leqslant r\leqslant k<p,\\
0, \hskip4.6cm \text{otherwise}.
 \end{cases}$$
\end{thm}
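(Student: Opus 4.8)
The plan is to compute $St^{(s),(i)}R_{2;0,1}$ by writing $R_{2;0,1}=M_{2;0,1}L_2^{p-2}=x_1x_2L_2^{p-2}$ and applying the Cartan formula, exactly mirroring the structure of the preceding proofs. First I would record that $\deg R_{2;0,1}=\deg(x_1x_2)+\deg L_2^{p-2}=2+2(p-2)(p+1)<2p^2$, so that $St^{(s),(i)}R_{2;0,1}=0$ for $i\geqslant p^2$, reducing everything to the range $i=kp+r$ with $0\leqslant k,r<p$. The essential new input is the action of $St^{(s),(j)}$ on $M_{2;0,1}=x_1x_2$. Since $x_1x_2$ carries no $y$-variables, Lemma \ref{bd2.2} forces $St^{(s),(j)}(x_1x_2)$ to vanish unless $j=0$; and for $j=0$ the operation $St^{(s),(0)}$ lowers the exterior degree, producing $\beta$-type terms. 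Concretely one finds by the Cartan formula that $St^{(s),(0)}(x_1x_2)=x_1y_2^{p^s}-y_1^{p^s}x_2=-[1;s]$, i.e. $-M_{2;s}$ up to the established notation, which is the analogue of Lemma \ref{bd4.1} for $M_{2;0,1}$.

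Having isolated this, the main computation collapses because only the $j=0$ piece of $St^{(s),(j)}(x_1x_2)$ survives in the Cartan expansion, so no cross terms from $P^pM_{2;0,1}$ or $P^1M_{2;0,1}$ appear. Thus I expect
\begin{align*}
St^{(s),(i)}R_{2;0,1}&=St^{(s),(i)}(M_{2;0,1}L_2^{p-2})=St^{(s),(0)}(M_{2;0,1})\,P^iL_2^{p-2}\\
&=-[1;s]\,(-1)^k(k+1)\binom kr L_2^{p-2}Q_{2,0}^rQ_{2,1}^{k-r},
\end{align*}
using Lemma \ref{bd3.2} for $P^iL_2^{p-2}$. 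The factor $[1;s]$ is the determinant with rows $(x_1,x_2)$ and $(y_1^{p^s},y_2^{p^s})$; for $s=0$ this is $[1;0]=M_{2;1}$ and for $s=1$ it is $[1;1]=M_{2;0}$, which immediately yields the first two cases once the signs are reconciled against the overall $(-1)^{k+1}$ in the statement.

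For $s>1$ the remaining step is to re-express $[1;s]L_2^{p-2}$ in terms of the generators $R_{2;0},R_{2;1},Q_{2,0},Q_{2,1}$. The natural route is to expand $[1;s]$ by a Laplace/cofactor argument along the same lines that produced Proposition \ref{md2.9}: one reduces $y_j^{p^s}$ through the recursion governing the Dickson relations, so that $[1;s]$ becomes an $\mathbb F_p$-linear combination of $[1;1]=M_{2;0}$ and $[1;0]=M_{2;1}$ with coefficients that are polynomials in $Q_{2,0},Q_{2,1}$ indexed by the admissible sets $I(0,s)$ and $I(1,s)$. After multiplying by $L_2^{p-2}$ and collecting, $M_{2;0}L_2^{p-2}=R_{2;0}$ and $M_{2;1}L_2^{p-2}=R_{2;1}$ convert these into the two sums appearing in the theorem, and the prefactor $(-1)^k(k+1)\binom kr$ from $P^iL_2^{p-2}$ multiplies through.

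The hard part will be the bookkeeping in this last expansion of $[1;s]$: verifying that the coefficient sets are precisely $I(1,s)$ for the $R_{2;1}$ term and $I(0,s)$ for the $R_{2;0}$ term, and that the exponents of $Q_{2,0},Q_{2,1}$ and the signs $(-1)^a$ come out exactly as stated. I expect this to follow mechanically from Proposition \ref{md2.9} once $[1;s]$ is written in a determinantal form parallel to $[u,v]$, but matching the two index sets and the shift between $\frac{p^{s-1}-1}{p-1}$ and $\frac{p^{s-1}-p}{p-1}$ requires care; this is the only genuine obstacle, the rest being the same Cartan-formula induction already used for $R_{2;0}$ and $R_{2;1}$.
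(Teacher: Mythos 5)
Your proposal follows the paper's own proof essentially step for step: the degree bound $\deg R_{2;0,1}<2p^2$, the collapse of the Cartan expansion to the single term $St^{(s),(0)}(x_1x_2)\,P^iL_2^{p-2}$, the evaluation $St^{(s),(0)}(x_1x_2)=-[1;s]$, Lemma \ref{bd3.2} for $P^iL_2^{p-2}$, and Proposition \ref{md2.9} to produce the sums over $I(0,s)$ and $I(1,s)$. Two remarks. First, a sign slip: the Cartan formula gives $St^{(s),(0)}(x_1x_2)=y_1^{p^s}x_2-x_1y_2^{p^s}$ (the minus sign comes from the factor $(-1)^{(\deg x_1+\ell(S_1))\ell(S_2)}$ in the term with $S_2=(s)$), and this equals $-[1;s]$; what you wrote, $x_1y_2^{p^s}-y_1^{p^s}x_2$, is $+[1;s]$. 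Since you carry $-[1;s]$ forward, nothing downstream is affected. Second, to justify the collapse you also need $St^{(s),(j)}(L_2^{p-2})=0$ (so the Cartan terms in which the $\tau_s$-component hits $L_2^{p-2}$ drop out); this is immediate from Lemma \ref{bd2.2} and the Cartan formula because $L_2^{p-2}$ involves only the $y$'s, and the paper passes over it just as silently.

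The genuine issue is the step you yourself flag as ``the only genuine obstacle'': for $s>1$ you never actually expand $[1;s]$ in terms of $M_{2;0}$, $M_{2;1}$ with Dickson coefficients, so your $s>1$ case is an outline rather than a proof. The paper closes this gap with one explicit determinantal (Laplace-type) identity, which you should state and verify:
$$[1;s]\,L_2 \;=\; M_{2;0}[0,s]-M_{2;1}[1,s],$$
equivalently $-[1;s]=\big(M_{2;1}[1,s]-M_{2;0}[0,s]\big)/L_2$; both sides equal $(x_1y_2^{p^s}-x_2y_1^{p^s})(y_1y_2^p-y_1^py_2)$ by direct expansion. With this identity there is no new recursion or bookkeeping at all: Proposition \ref{md2.9} applied verbatim to $[1,s]$ and $[0,s]$, together with $L_2^{p-1}=Q_{2,0}$, gives
$$[1,s]/L_2=\sum_{a\in I(1,s)}(-1)^aQ_{2,0}^{pa+1}Q_{2,1}^{\frac{p^{s-1}-p}{p-1}-(p+1)a},\qquad
[0,s]/L_2=\sum_{a\in I(0,s)}(-1)^aQ_{2,0}^{pa}Q_{2,1}^{\frac{p^{s-1}-1}{p-1}-(p+1)a},$$
and multiplying by $L_2^{p-2}$ converts $M_{2;1}$, $M_{2;0}$ into $R_{2;1}$, $R_{2;0}$, yielding exactly the index sets, exponents and signs of the statement (the $s=0,1$ cases are the degenerate instances $[1,0]=-L_2$, $[0,1]=L_2$, $[1,1]=0$). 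Your alternative plan of reducing $y_j^{p^s}$ through the Dickson recursion would also work --- it is essentially how Proposition \ref{md2.9} is proved in the first place --- but as written it re-derives that proposition rather than using it, and it is precisely the part you left unexecuted.
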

\begin{proof} Since $\deg R_{2;0,1} = (p-2)(p+1)+2$, $St^{(s),(i)}R_{2;0,1} =0$ for $i \geqslant p^2$. Suppose $i<p^2$ and $i= kp+r$ with $0\leqslant k,r <p$. Using the Cartan formula and Lemma \ref{bd2.2}, we have
$$St^{(s),(0)}(x_1x_2) = y_1^{p^s}x_2-x_1y_2^{p^s} = -{[1;s]} = (M_{2;1}{[1,s]} - M_{2;0}{[0,s]})/L_2.$$
Since $R_{2;0,1} = x_1x_2L_2^{p-2}$, using the Cartan formula, the above equality and Lemma \ref{bd3.2}, we get
\begin{align*} St^{(s),(i)}R_{2;0.1} &= St^{(s),(0)}(x_1x_2)P^iL_2^{p-2}\\
&= (M_{2;1}{[1,s]} - M_{2;0}{[0,s]})(-1)^k(k+1)\binom kr L_2^{p-3}Q_{2,0}^rQ_{2,1}^{k-r}.
\end{align*}
Combining this equality and Proposition \ref{md2.9}, we get the theorem.
\end{proof}
{}

\bigskip

Department of Mathematics, University of  Quynhon 

170 An Duong Vuong, Quynhon, Vietnam

E-mail address: nguyensum@qnu.edu.vn


\begin{thebibliography}{99}

\bibitem[1]{di} L. E. Dickson, {\it A fundamental  system  of  invariants  of the general modular linear group with  a solution of  the form problem},  Trans.  Amer. Math.  Soc. {\bf 12} (1911), 75-98.

\bibitem[2]{hm}  N. H. V. Hung  and  P.  A.  Minh,
{\it  The  action of the mod $p$ Steenrod operations  on  the  modular  invariants of linear groups},  Vietnam J. Math.  {\bf 23} (1995), 39-56.

\bibitem[3]{ke}  N. E. Kechagias, {\it The Steenrod algebra action on generators
of rings of  invariants of subgroups of ${GL_n(\mathbb Z/p\mathbb Z)}$},  Proc. Amer. Math. Soc. {\bf 118} (1993), 943-952.

\bibitem[4]{mm}  I. Madsen and R. J. Milgram, {\it The classifying spaces for surgery and cobordism of manifolds}, Annals of Mathematics Studies No. 92, Princeton University Press, Princeton N.J (1979).

\bibitem[5]{mi}   J. Milnor, {\it Steenrod algebra and its dual}, Ann. of Math.  {\bf 67} (1958), 150-171.

\bibitem[6]{m1} 
 H. Mui, {\it Modular invariant theory and the cohomology algebras
 of symmetric groups},  J. Fac. Sci. Univ. Tokyo Sec. IA Math.  {\bf 22} (1975), 319-369.

\bibitem[7]{m2}   H. Mui, {\it Cohomology operations derived from modular invariants},  Math. Z.   {\bf 193} (1986), 151-163.

\bibitem[8]{ne}   M. D. Neusel, {\it Inverse invariant theory and Steenrod operations}, Mem. Amer. Math. Soc.   {\bf 146} (2000), x+158.

\bibitem[9]{ss}  L. Smith and R.  Switzer, {\it  Realizability  and  non-realizability  of
Dickson algebras  as  cohomology  rings},   Proc.  Amer. Math. Soc.    {\bf 89} (1983), 303-313.

\bibitem[10]{st}  N. E. Stennrod, {\it Cohomology operations},
 Lectures by N. E. Steenrod written and revised by D. B. A. Epstein, Annals of Mathematics No. 50,  Princeton University Press, Princeton N.J (1962).

\bibitem[11]{s1}  N. Sum, {\it On the  action of  the  Steenrod algebra on the
modular invariants  of special  linear group},  Acta  Math. Vietnam.  {\bf 18} (1993), 203-213.

\bibitem[12]{s2}  N. Sum, {\it Steenrod operations on the modular invariants}, Kodai Math. J. {\bf 17} (1994), 585-595.

\bibitem[13]{s3} N. Sum, {\it The action of the primitive Steenrod-Milnor operations on the modular invariants}, Geometry and Topology Monographs, {\bf 11} (2007), 349-367.

\bibitem[14]{wi}  C. Wilkerson, {\it A primer on the Dickson invariants}, Contemporary Mathematics, Amer. Math. Soc.  {\bf 19} (1983), 421-434. 


\end{thebibliography}
\end{document}